\DeclareSymbolFont{cyrletters}{OT2}{wncyr}{m}{n}
\DeclareMathSymbol{\Sha}{\mathalpha}{cyrletters}{"58}
\font\teneufm=eufm10 \font\seveneufm=eufm7
\font\fiveeufm=eufm5
\let\goth\mathfrak
\def\cA{\mathcal A}
\def\cC{\mathcal C}
\def\cO{\mathcal O}
\def\cLie{\mathcal Lie}
\def\cE{\mathcal E}
\def\cF{\mathcal F}
\def\cI{\mathcal I}
\def\cN{\mathcal N}
\def\cX{\mathcal X}
\def\cY{\mathcal Y}
\def\gm{\goth m}
\def\GG{\mathbb{G}}
\def\FF{\mathbb{F}}
\def\WW{\mathbf{W}}
\def\VV{\mathbf{V}}
\def\bV{\mathbb{V}}
\def\gg{\goth g}
\def\gY{\goth Y}
\def\gX{\goth X}
\def\gg{\goth g}
\def\1{\mbox{\bf 1}}
\def\rad{\mathrm{rad}}
 \DeclareMathOperator{\Hom}{Hom}
\DeclareMathOperator{\Aut}{Aut}
\DeclareMathOperator{\Isom}{Isom}
\DeclareMathOperator{\Sup}{Sup}
\DeclareMathOperator{\GL}{\rm GL}
\DeclareMathOperator{\SL}{\rm SL}
\newcommand{\incl}[1][r]
{\ar@<-0.2pc>@{^(-}[#1] \ar@<+0.2pc>@{-}[#1]}
\newtheorem{stheorem}{Theorem}[section]%Theorems et al in italic font.
\newtheorem{sclaim}[stheorem]{Claim}
\newtheorem{scorollary}[stheorem]{Corollary}
\newtheorem{slemma}[stheorem]{Lemma}
\newtheorem{sproposition}[stheorem]{Proposition}
\newtheorem{sremark}[stheorem]{Remark}
\newtheorem{sremarks}[stheorem]{Remarks}
\newtheorem{sexample}[stheorem]{Example}
\theoremstyle{definition}%Theorems et al in roman font.
\numberwithin{equation}{section}
\def\ZZ{\mathbb{Z}}
\def\PP{\mathbb{P}}
\def\Bun{\text{\rm Bun}}
\def\cO{\mathcal{O}}
\def\ol{\overline}
\def\e{\mathrm{e}}
\def\Lie{\mathop{\rm Lie}\nolimits}
\def\2int{\mathop{2\int}\nolimits}
\def\Spec{\mathop{\rm Spec}\nolimits}
\def\Sup{\mathop{\rm Sup}\nolimits}
\def\Lie{\mathop{\rm Lie}\nolimits}
\def\Hom{\mathop{\rm Hom}\nolimits}
\def\Inf{\mathop{\rm Inf}\nolimits}
\def\Pic{\mathop{\rm Pic}\nolimits}
\def\Tors{\mathop{\rm Tors}\nolimits}
\def\Aut{\text{\rm{Aut}}}
\def\Isom{\mathop{\rm Isom}\nolimits}
\def\resp.{\mathop{\rm resp.}\nolimits}
\def\limproj{\mathop{\oalign{lim\cr
\hidewidth$\longleftarrow$\hidewidth\cr}}}
\def\lgr{\longrightarrow}
\font\math=cmmi10
\def\varpi{\hbox{\math\char'44}}
\def\simlgr{\buildrel\sim\over\lgr}
\def\pa{\S\kern.15em }
\def\un{\uppercase\expandafter{\romannumeral 1}}
\def\deux{\uppercase\expandafter{\romannumeral 2}}
\def\trois{\uppercase\expandafter{\romannumeral 3}}
\def\quatre{\uppercase\expandafter{\romannumeral 4}}
\def\cinq{\uppercase\expandafter{\romannumeral 5}}
\def\six{\uppercase\expandafter{\romannumeral 6}}
\def\gg{\goth g}
\def\gsl{\goth sl}
\def\hfl#1#2#3{\smash{\mathop{\hbox to#3{\rightarrowfill}}\limits
^{\scriptstyle#1}_{\scriptstyle#2}}}
\def\gfl#1#2#3{\smash{\mathop{\hbox to#3{\leftarrowfill}}\limits
^{\scriptstyle#1}_{\scriptstyle#2}}}
\title[Local triviality]{Local triviality for $G$-torsors}
\author{P. Gille}\address{UMR 5208
Institut Camille Jordan - Universit\'e Claude Bernard Lyon 1
43 boulevard du 11 novembre 1918
69622 Villeurbanne cedex - France 
}
\email{gille@math.univ-lyon1.fr}
\author{R. Parimala}\address{Departement  of Mathematics and Computer Science,
MSC W401, 400 Dowman Dr. Emory University
Atlanta, GA 30322
USA  
}
\email{parimala.raman@emory.edu}
\author{V. Suresh}\address{Departement  of Mathematics and Computer Science,
MSC W401, 400 Dowman Dr. Emory University
Atlanta, GA 30322
USA  
}
\email{suresh@mathcs.emory.edu}
\date{\today}
\begin{document}

 \begin{abstract} Let $C \to \Spec(R)$ be a relative proper
 flat curve over a henselian base. Let $G$ be a reductive $C$--group scheme.
 Under mild technical assumptions, we show that a $G$--torsor over $C$ which is 
 trivial on the closed fiber of $C$ is locally trivial for the Zariski topology. 

\smallskip

\noindent {\em Keywords:} Reductive group scheme, torsor, deformation.  \\

\noindent {\em MSC 2000:} 14D23, 14F20.
\end{abstract}

\maketitle

% {\small \tableofcontents }

\bigskip

\section{Introduction}\label{section_intro}

The purpose of the paper is to study local triviality 
for $G$--torsors over a 
a {\it relative curve} over an affine base $\Spec(R)$, that is a 
flat proper morphism $C \to \Spec(R)$ of finite presentation 
whose fibers have dimension  $\leq 1$. 
% (it is the schematical case of \cite[Tag 0D4Z]{St})
We deal here with semisimple $C$-group schemes 
which are not necessarily extended from $R$.
Our main result can be stated as follows.

\begin{stheorem}\label{thm_main} Let $R$ 
be a heneselian local noetherian ring with residue field $\kappa$.
 Let $f: C \to \Spec(R)$ be a relative curve of relative dimension $1$
 and denote by $C^{sm}$ the smooth locus of $f$.
We assume that   one of the following holds:

\smallskip

\noindent (I) $C^{sm}_\kappa$ is dense in $C_\kappa$;

\smallskip

\noindent (II) $R$ is a DVR and $C$ is integral regular.

\smallskip

Let $G$ be a semisimple $C$-group scheme
and denote by $q: G^{sc} \to G$ its simply connected covering.  
 We assume that the fundamental group $\mu/C= \ker(q)$ of $G$ is \'etale over $C$.
 Let $E$, $E'$  be  two $G$-torsors over $C$ 
such that 
$E \times_{C} C_\kappa$ is isomorphic
to $E' \times_C C_\kappa$. Then $E$ and $E'$ are  locally isomorphic
for the Zariski topology.

\end{stheorem}

%\begin{sremark}{\rm  
%Note that $\cO_S \to f_*\cO_C$
%is universally an isomorphism in case (I)
%and that case (II) includes the case when $C$ is normal,  
%$\cO_S \to f_*\cO_X$ is an isomorphism and the $g.c.d.\!$ of the geometric multiplicities
%of the irreducible components of $C_\kappa$ is prime
%to the characteristic exponent of $k$. For more details, see Examples \ref{rem_flat}.
%}
%\end{sremark}

\noindent We recall that relative dimension $1$
means that all nonempty fibers $C_s$ are equidimensional of dimension $1$ \cite[Tag 02NJ]{St}.
A related  result is that of Drinfeld and Simpson \cite[th.\ 2]{DS}.
In the case $G$ is semisimple split  and 
$R$ is strictly henselian  they showed  in the smooth case (I) that  
a $G$--torsor over $C$ is locally trivial for the Zariski topology;
according to one of the referees, inspection of the proof
shows this extends to the case of a henselian ring $R$ in the case $C(R) \not = \emptyset $.
Drinfeld-Simpson's result has been generalized recently by 
Belkale-Fakhruddin to a wider setting \cite{BF1, BF2}. 
 We provide a variant in Theorem \ref{thm_ref}
in the case of a henselian base and without any splitting assumptions;
more precisely we do not assume that $G$ admits a proper parabolic subgroup.

One important difference is that we only require that  the ring $R$ is
henselian.  We consider Zariski triviality on $C$
with respect to  henselian (or Nisnevich) topology on the base  while
Drinfeld-Simpson deal with Zariski triviality on $C$ with respect to  the 
 \'etale topology on the base.
We stated the semisimple case but the result extends 
in the reductive case by combining with the case of tori, see 
Theorem \ref{thm_red}.
We denote by $F$ the function field
of $C$; in the case of a DVR, the main result 
leads to  new cases of a local-global principle 
for $G_{F}$--torsors (Corollary \ref{cor_ref}).
More precisely if $C$ is smooth over $R$
with geometrically connected fibers and $G$ is
a semisimple group over $C$ (whose fundamental group is \'etale),
then a torsor under $G$ over $F$ which is trivial at all completions 
of $F$ at discrete valuations of $F$ is trivial.

%%%%%%%%%%%%%%%%%%%%

Let us review the contents of the paper.
The toral case is quite different from the semisimple one
since it works in higher  dimensions;  it  is treated in section \ref{section_tori}
by means of the proper base change theorem.
The section \ref{section_KT} deals with generation by  one-parameter subgroups, namely
the Kneser-Tits problem. 
Section \ref{section_moduli} extends Sorger's construction of the moduli stack 
of $G$--bundles \cite{So} and discusses in detail its tangent bundle.
The next  section \ref{section_unif} recollects facts on patching for $G$--torsors
and provides the main technical statement,
namely the parametrization of the deformations of a given torsor in the 
henselian case in the  presence of isotropy (Proposition \ref{prop_section});
this refines Heinloth's uniformization \cite{He1}).
Section \ref{section_result} explains why this intermediate statement
is enough for establishing the fact that deformations of a given torsor
(in the henselian case) are locally trivial for the Zariski topology.
One important point is that we can get rid of the isotropy assumptions. 
Finally section \ref{section_gather} provides a general 
theorem for reductive groups. 
We include at the end a short appendix \ref{section_appendix} gathering
facts on smoothness for morphisms of algebraic stacks.

\medskip

\noindent{\bf Acknowledgements.} 
 We thank Laurent Moret-Bailly
for the extension of the toral case beyond curves,  a strengthened version 
of Lemma  \ref{lem_qs}  and several  suggestions. 
We thank   Jean-Louis Colliot-Th\'el\`ene for communicating to 
us his method to deal with tori.  We thank Olivier Benoist
for raising a question answered by Theorem \ref{thm_ref}.

Finally we thank Lie Fu,  Ofer Gabber,  Jochen  Heinloth and Anastasia Stavrova 
for useful discussions.

\smallskip

The first author is supported  by the project ANR Geolie, ANR-15-CE
40-0012, (The French National Research Agency).
The  second and third   authors are   partially supported by National
Science Foundation grants DMS-1463882 and DMS-1801951.

\medskip

\noindent{\bf Conventions and Notations.}
We use mainly  the terminology and notations of Grothendieck-Dieudonn\'e \cite[\S 9.4  and 9.6]{EGA1}
 which agree with that  of Demazure-Grothendieck used in \cite[Exp. I.4]{SGA3}

(a) Let $S$ be a scheme and let $\cE$ be a quasi-coherent sheaf over $S$.
 For each morphism  $f:T \to S$, 
we denote by $\cE_{T}=f^*(\cE)$ the inverse image of $\cE$ 
by the morphism $f$.
 We denote by $\VV(\cE)$ the affine $S$--scheme defined by 
$\VV(\cE)=\Spec\bigl( \mathrm{Sym}^\bullet(\cE)\bigr)$;
it is affine  over $S$ and 
represents the $S$--functor $Y \mapsto \Hom_{\cO_Y}(\cE_{Y}, \cO_Y)$ 
\cite[9.4.9]{EGA1}. 
%This construction generalizes to algebraic spaces. 

\smallskip

(b) We assume now that $\cE$ is locally free and denote by $\cE^\vee$ its dual.
In this case the affine $S$--scheme $\VV(\cE)$ is  of finite presentation 
(ibid, 9.4.11); also
the $S$--functor $Y \mapsto H^0(Y, \cE_{Y})= 
\Hom_{\cO_Y}(\cO_Y, \cE_{Y} )$ 
is representable by the  affine $S$--scheme $\VV(\cE^\vee)$
which is also denoted by  $\WW(\cE)$  \cite[I.4.6]{SGA3}.
 %Once again this construction generalizes to algebraic spaces. 

It applies to the locally free coherent sheaf
${\cE}nd(\cE) = \cE^\vee \otimes_{\cO_S} \cE$ 
 over $S$ so that we can consider
the affine $S$--scheme $\VV\bigl({\cE}nd(\cE)\bigr)$
which is an $S$--functor in associative commutative and unital algebras
\cite[9.6.2]{EGA1}.
Now we consider the $S$--functor $Y \mapsto \Aut_{\cO_Y}(\cE_{Y})$.
It is representable by an open $S$--subscheme of $\VV\bigl({\cE}nd(\cE)\bigr)$
which is denoted by $\GL(\cE)$ ({\it loc. cit.}, 9.6.4).

\smallskip

(c) We denote by $\ZZ[\epsilon]= \ZZ[x]/x^2$ the ring of dual integers and 
by $S[\epsilon]=S \times_\ZZ \ZZ[\epsilon]$.
If $G/S$ is an $S$--group space (i.e. an algebraic space in groups, called
group algebraic space over $S$ in 
\cite[Tag 043H]{St}) we
denote by $\Lie(G)$ the $S$--functor
defined by $\Lie(G)(T)= \ker\bigl( G(T[\epsilon]) \to G(T) \bigr)$.
This $S$-functor is a functor in Lie $\cO_S$-algebras, see 
\cite[II.4.1]{SGA3}  or \cite[II.4.4]{DG}. More facts are collected
in Appendix \ref{app_lie}.

\smallskip

\smallskip

(d) If $G/S$ is an affine smooth $S$--group scheme, we denote by $\Tors_G(S)$
the groupoid of  (right)  $G$--torsors over $S$ and by $H^1(S,G)$ the 
set of isomorphism classes of $G$--torsors (locally trivial for the \'etale topology), 
we have a classifying map $\Tors_G(S) \to  H^1(S,G)$, $E \mapsto [E]$.

\medskip

\bigskip

\section{The case of tori}\label{section_tori}
In this section we prove variants of Theorem \ref{thm_main} for nice tori over proper schemes
over local henselian noetherian rings (not necessarily relative curves). The main statement is Theorem \ref{thm_torus}
 which is used in Theorem \ref{thm_ref} on reductive group schemes.

\begin{slemma}\label{lem_MB} Let $X$ be a scheme and 
let $T$ be an $X$--torus.
 Assume that $T$ is split by a finite \'etale cover of degree
$d$. Then $d H^1(X,T) \subseteq H^1_{Zar}(X,T)$.
\end{slemma}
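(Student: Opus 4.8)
The plan is to realize multiplication by $d$ on $H^1(X,T)$ as a composite of restriction to the splitting cover and a norm (transfer) map, and then to exploit that over the cover $T$ becomes a product of copies of $\GG_m$, for which $H^1$ is nothing but $\Pic$ and is therefore harmless for Zariski-local triviality. So let $\pi\colon X'\to X$ be a finite \'etale cover of degree $d$ splitting $T$, i.e. $\pi^*T\cong \GG_m^{\,n}$ over $X'$.

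Since $\pi$ is finite \'etale, the functor $\pi_*$ on abelian \'etale sheaves is exact, and there is a trace (transfer) morphism $\mathrm{tr}\colon \pi_*\pi^*T\to T$ whose composite with the adjunction unit $T\to\pi_*\pi^*T$ is multiplication by $d$; on group schemes this is the norm morphism $N\colon R_{X'/X}(\pi^*T)\to T$ attached to the Weil restriction. Using the identification $H^1(X,\pi_*\pi^*T)\cong H^1(X',\pi^*T)$ (exactness of $\pi_*$, i.e. Shapiro's lemma), I get a norm map $N_*\colon H^1(X',\pi^*T)\to H^1(X,T)$ with $N_*\circ\mathrm{res}=[d]$. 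Hence for any $\alpha\in H^1(X,T)$ I would write $d\alpha=N_*(\beta)$ with $\beta:=\mathrm{res}(\alpha)\in H^1(X',\pi^*T)=H^1(X',\GG_m^{\,n})=\Pic(X')^{\,n}$.

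It then remains to show $N_*(\beta)$ is Zariski-locally trivial on $X$, which is where the real work lies. The key formal input is that $N_*$ commutes with restriction to a Zariski open $U\subseteq X$: writing $U'=\pi^{-1}(U)$, the norm for $U'/U$ satisfies $N_*(\beta)|_U=N_*(\beta|_{U'})$, because $\mathrm{tr}$, $\pi_*$ and $\pi^*$ all commute with the flat base change $U\hookrightarrow X$. Thus I only need, for each $x\in X$, a Zariski neighbourhood $U\ni x$ on which $\beta|_{U'}$ is trivial; then $N_*(\beta)|_U=0$, so the torsor representing $d\alpha$ is trivial over $U$. To produce $U$ I would take $U=\Spec(A)$ affine with $x\leftrightarrow\mathfrak p$; as $\pi$ is affine, $U'=\Spec(A')$ is affine with $A\to A'$ finite \'etale of degree $d$, and $\beta$ corresponds to line bundles $L_1,\dots,L_n$ on $X'$. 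Now $A'_{\mathfrak p}:=A'\otimes_A A_{\mathfrak p}$ is finite over the local ring $A_{\mathfrak p}$, hence semilocal, so $\Pic(A'_{\mathfrak p})=0$ and each $L_i$ is free over $A'_{\mathfrak p}$. By finite presentation, each $L_i$ is already free over $A'_{f_i}=A'\otimes_A A_{f_i}$ for some $f_i\notin\mathfrak p$; setting $f=\prod_i f_i$ and $U_1=D(f)\ni x$, I find $\beta$ trivial on $\pi^{-1}(U_1)$, whence $N_*(\beta)|_{U_1}=0$. Since $x$ is arbitrary, $d\alpha=N_*(\beta)$ is Zariski-locally trivial.

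I expect the main obstacle to be exactly the penultimate step: one cannot in general trivialize $\beta$ simultaneously on a Zariski neighbourhood of all the finitely many points of $\pi^{-1}(x)$ by gluing local trivializations, since a line bundle trivial near each point of a finite set need not be trivial on a common open. The clean way around this is the semilocal reduction above, which trivializes $\beta$ over the semilocalization $A'\otimes_A A_{\mathfrak p}$ in one stroke using the vanishing of $\Pic$ of a semilocal ring, and then descends the trivialization to an honest Zariski open by finite presentation. Everything else — the existence and base-change functoriality of the transfer, and the identification $H^1(X',\GG_m)=\Pic(X')$ — is formal.
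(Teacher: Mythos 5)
Your proof is correct and is essentially the paper's own argument: both realize $d\alpha$ as a norm (corestriction) of the restricted class along the splitting cover, using $N_*\circ\mathrm{res}=[d]$, and then show the image of the norm is Zariski-locally trivial because the pullback of the cover to $\Spec(\cO_{X,x})$ is a semilocal scheme, where $\Pic$ vanishes and $T$ is split, so $H^1$ of the split torus vanishes there. The only difference is presentational: you make explicit the spreading-out of the trivialization from the semilocalization $A'\otimes_A A_{\mathfrak p}$ to a principal open via finite presentation, and you construct the transfer by hand from the trace $\pi_*\pi^*T\to T$, whereas the paper cites Colliot-Th\'el\`ene--Sansuc for the norm and leaves the limit argument implicit in the phrase ``the construction of the norm commutes with base change.''
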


\begin{proof} Let 
$f : Y  \to X$ be a finite \'etale cover of degree $d$
which splits $T$, that is $T_Y \cong \GG_{m,Y}^r$. 
According to \cite[0.4]{CTS}, we have a norm
map $f_*: H^1(Y,T) \to H^1(X,T)$ such that the composite
$H^1(X,T) \xrightarrow{f^*} H^1(Y,T) \xrightarrow{f_*}  H^1(X,T)$
is the multiplication by $d$.  We claim that $f_*\bigl( H^1(Y,T) \bigr)
\subseteq H^1_{Zar}(X,T)$.
Let $x \in X$. Then $V_x= {\Spec(\cO_{X,x}) \times_X Y}$ is a semilocal 
scheme so that $\Pic(V_x)=0$ \cite[Chap. 2, Sect. 5, No. 3, Proposition 5]{BAC}. 
Since $T$ is split over $V_x$,  $H^1(V_x,T)=0$. Since the construction of the norm commutes 
with base change,
we get that $\Bigl( f_*\bigl( H^1(Y,T) \bigr) \Bigr)_{\cO_{X,x}}=0$.
In particular we have $d  H^1(X,T)  \subseteq H^1_{Zar}(X,T)$.
\end{proof}

\newpage

 \begin{sproposition}\label{prop_MB} Let $R$ be a henselian 
 local ring of 
residue field $\kappa$. We denote by $p$ 
the characteristic exponent of $\kappa$
and let $l$ be a prime number distinct
from  $p$. 
Let $X$ be a proper $R$-scheme
and let $T$ be an $X$--torus.

\smallskip

\noindent (1)  For each $i \geq 0$, 
$\ker\bigl( H^i(X, T) \to H^i(X_\kappa,T) \bigr)$
is $l$--divisible.

\smallskip

\noindent (2) 
The kernel
$\ker\bigl( H^0(X, T) \to H^0(X_\kappa,T) \bigr)$
is {\it uniquely} $l$--divisible.

\smallskip

\noindent (3) We assume that  $T$ is locally isotrivial, that is
 there exists an open cover $(U_i)_{i=1,\dots,n}$ of $X$
and finite \'etale covers $f_i: V_i \to U_i$ such that 
$T_{V_i}$ is split for $i=1,\dots,n$.
Then there exists $r \geq 0$ such that
$p^r \ker\bigl( H^1(X, T) \to H^1(X_\kappa,T) \bigr) 
\subseteq H^1_{Zar}(X,T)$. 

\end{sproposition}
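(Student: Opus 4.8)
The plan is to compare the cohomology of $X$ with that of the closed fiber $X_\kappa$ through the $l$-torsion of $T$. Since $l \neq p$, its image in $\kappa$ is nonzero, so $l$ is invertible in the local ring $R$, hence on $X$; therefore the Kummer sequence of \'etale sheaves
\[
1 \to T[l] \to T \xrightarrow{\ l\ } T \to 1
\]
is exact, and $T[l]$ is a finite \'etale $X$-group scheme of order prime to the residue characteristic, i.e. a constructible locally constant torsion sheaf. The crucial input is the \emph{proper base change theorem}: because $f\colon X \to \Spec(R)$ is proper and $R$ is henselian local, restriction to the closed fiber induces isomorphisms $H^i(X, T[l]) \xrightarrow{\sim} H^i(X_\kappa, T[l])$ for all $i \geq 0$. (One combines proper base change over $\Spec(R)$ with the fact that \'etale cohomology of a henselian local ring coincides with that of its closed point.)

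For (1) and (2) I would run a diagram chase between the long exact Kummer sequences for $X$ and for $X_\kappa$, linked by these restriction maps. Fix $\alpha$ in $K_i := \ker\bigl(H^i(X,T) \to H^i(X_\kappa,T)\bigr)$. Its image under the connecting map $\partial\colon H^i(X,T) \to H^{i+1}(X,T[l])$ restricts to $\partial(\alpha|_{X_\kappa}) = 0$, so by the isomorphism above $\partial\alpha = 0$ and hence $\alpha = l\beta$ for some $\beta \in H^i(X,T)$. Then I correct $\beta$: since $l\,(\beta|_{X_\kappa}) = \alpha|_{X_\kappa} = 0$, the class $\beta|_{X_\kappa}$ lifts through $H^i(X_\kappa, T[l])$, which by the isomorphism comes from some $\gamma \in H^i(X,T[l])$ restricting to that lift; its image $\bar\gamma \in H^i(X,T)$ satisfies $l\bar\gamma = 0$ and $\bar\gamma|_{X_\kappa} = \beta|_{X_\kappa}$. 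Then $\beta - \bar\gamma \in K_i$ and $l(\beta - \bar\gamma) = \alpha$, proving $K_i$ is $l$-divisible. For (2), with $i = 0$, any $\alpha \in K_0$ killed by $l$ lies in $H^0(X, T[l])$ and restricts to $0$ on $X_\kappa$, hence vanishes by the isomorphism in degree $0$; so $K_0$ is $l$-torsion-free, and divisibility makes it uniquely $l$-divisible.

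For (3) I would package (1) with Lemma \ref{lem_MB}. Since $X$ is proper, hence quasi-compact, local isotriviality of $T$ yields a finite Zariski cover $X = \bigcup_{j} U_j$ with $T|_{U_j}$ split by a finite \'etale cover of some degree $d_j = p^{r_j} m_j$, $\gcd(m_j,p)=1$; set $r = \max_j r_j$. Fix $\alpha \in K := K_1$. By part (1), $K$ is divisible by each prime factor of $m_j$, hence $m_j$-divisible, so $\alpha = m_j\beta_j$ with $\beta_j \in K$. On $U_j$ we then get $p^{r_j}\alpha|_{U_j} = d_j\,\beta_j|_{U_j} \in H^1_{Zar}(U_j,T)$ by Lemma \ref{lem_MB}, whence $p^r\alpha|_{U_j} \in H^1_{Zar}(U_j,T)$. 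As the $U_j$ cover $X$, the class $p^r\alpha$ is trivial on a Zariski refinement of this cover, so $p^r\alpha \in H^1_{Zar}(X,T)$.

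The main obstacle is the comparison isomorphism $H^i(X,T[l]) \cong H^i(X_\kappa,T[l])$: it is here that both hypotheses (properness of $f$ and the henselian property of $R$) are genuinely used, and it must be invoked over the possibly non-separably-closed residue field $\kappa$ rather than over a geometric point. The remaining delicate point is the correction step in (1), ensuring the $l$-th root $\beta$ can be chosen inside the kernel $K_i$ rather than merely in $H^i(X,T)$; everything else is formal once the isomorphism is in hand.
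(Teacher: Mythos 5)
Your proposal is correct and follows essentially the same route as the paper: the multiplication-by-$l$ Kummer-type sequence for $T$, the proper base change theorem over the henselian base to identify $H^i(X,{}_lT)$ with $H^i(X_\kappa,{}_lT)$, the diagram chase with the correction step landing the $l$-th root back in the kernel, and for (3) the combination of prime-to-$p$ divisibility from (1) with Lemma \ref{lem_MB}. The only (harmless) difference is bookkeeping in (3), where you apply Lemma \ref{lem_MB} chart by chart with $r=\max_j r_j$ instead of the paper's single l.c.m.\ $d=p^re$ --- in fact your local application is slightly more faithful to the hypotheses of the lemma, since $T$ is only locally isotrivial.
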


\begin{proof}
(1) We consider the 
exact sequence of \'etale $X$--sheaves  
$1 \to {_lT} \to T \xrightarrow{\times l} T \to 1$
which generalizes the Kummer sequence.
It gives rise to the 
following commutative diagram 

\[\xymatrix@1{
 H^i(X, {_lT}) \ar[r] \ar[d]^{\beta_1} &
H^i(X, T)
\ar[r]^{\times l} \ar[d] &   H^i(X, T)  \ar[d]
\ar[r] & H^{i+1}(X, {_lT})  \ar[d]^{\beta_2}
\\
 H^i(X_\kappa,{_lT}) \ar[r]  &
H^i(X_\kappa, T)
\ar[r]^{\times l}  &   H^i(X_\kappa, T) 
\ar[r] & H^{i+1}(X_\kappa, {_lT}) .
}\]
The proper base change theorem  \cite[XII.5.5.(iii)]{SGA4} 
shows that $\beta_1$, $\beta_2$ are isomorphisms.
By  diagram chase, we conclude that 
$\ker\bigl( H^i(X, T) \to H^i(X_\kappa, T) \bigr)$
is $l$--divisible.

\smallskip

\noindent (2) If $i=0$, we can complete the left-hand side of the diagram with $0$.
By  diagram chase it follows that 
$\ker\bigl( H^0(X, T) \to H^0(X_\kappa, T) \bigr)$
is uniquely $l$--divisible.

\smallskip

\noindent (3) Let  $(U_i)_{i=1,\dots,n}$ be an open cover of $X$
and finite \'etale covers $f_i: V_i \to U_i$ such that 
$T_{V_i}$ is split for $i=1,\dots,n$. Let $d$ be the l.c.m.\!\! of the degrees
of the $f_i$'s. We write $d=p^r e$ with $(e,p)=1$.
Assertion (1) shows that
$\ker\bigl( H^1(X, T) \to H^1(X_\kappa,T) \bigr)$ is $e$-divisible
so that $\ker\bigl( H^1(X, T) \to H^1(X_\kappa,T) \bigr) \subseteq e H^1(X,T)$.
Lemma \ref{lem_MB} shows that  $d H^1(X,T) \subseteq H^1_{Zar}(X,T)$
which permits to conclude that we have the inclusion 
$p^r \ker\bigl( H^1(X, T) \to H^1(X_\kappa,T) \bigr) 
\subseteq H^1_{Zar}(X,T)$. 
 \end{proof}

\begin{sremark}{\rm Proposition \ref{prop_MB}.(1) fails completely 
for $l=p$ if the residue field $\kappa$ is of characteristic $p>0$ and
already for $\GG_m$. For example we take $R=\FF_p[[t]]$ (or $\ZZ_p$).
Then $\ker( R^\times \to \FF_p^\times)$ admits $\FF_p$ as quotient
so is not $p$--divisible. For the $H^1$, we consider a smooth  elliptic curve 
$E$ over $k$ having a $k$--point $0$. 
 We have $\Pic(E_{k[[t]]}) \simlgr  \Pic(E_{k((t))}) = E\bigl(k((t))\bigr) \oplus \ZZ =   E\bigl(k[[t]]\bigr) \oplus \ZZ$.
It follows that  \break $\ker\bigl( \Pic(E_{k[[t]]}) \to \Pic(E) \bigr)
\simlgr \ker\Bigl( E\bigl(k[[t]]\bigr) \to E\bigl(k\bigr) \Bigr)$ 
so that admits a quotient isomorphic to $\FF_p= \Lie(E)(\FF_p)$.
Therefore  $\ker\bigl( \Pic(E_{k[[t]]}) \to \Pic(E) \bigr)$ is not $p$-divisible.
}
\end{sremark}
 
 \newpage
 
\begin{stheorem}\label{thm_torus}
Let $R$ be a local henselian noetherian ring with residue 
field $k$ and $p = \mathrm{char}(k)$.
Let  $X \to \Spec(R)$ be a proper scheme and $T$ an $X$-torus.
   Suppose $T$  quasi-splits after 
a finite \'etale extension $X'/X$ of degree prime to $p$, 
 that is \break
 $T\times_X X' \cong  R_{X''/X'}(\GG_m)$ where
 $X''\to X'$ is a finite \'etale cover. 
Then \break $\ker\bigl( H^1(X, T) \to H^1(X_\kappa,T) \bigr) 
\subseteq H^1_{Zar}(X,T)$.
\end{stheorem}

\begin{proof} The theorem 90 of Hilbert-Grothendieck shows
that $H^1_{Zar}(X',T)= H^1(X',T_{X'})$. 
By corestriction-restriction it follows that 
$[X':X] H^1(X,T) \subseteq  H^1_{Zar}(X,T)$.
In particular we have 
 $$
 [X':X] \ker\bigl( H^1(X, T) \to H^1(X_\kappa,T) \bigr) 
\subseteq H^1_{Zar}(X,T).
$$
Since $[X':X]$ is prime to $p$, 
 Proposition \ref{prop_MB} yields that 
 $\ker\bigl( H^1(X, T) \to H^1(X_\kappa,T) \bigr) 
\subseteq H^1_{Zar}(X,T)$.
\end{proof}

\begin{scorollary} \label{cor2_torus} 
Suppose $\mathrm{char}(k)= 0$. 
Then  $ \ker\bigl( H^1(X, T) \to H^1(X_\kappa,T) \bigr) 
\subseteq H^1_{Zar}(X,T)$.
\end{scorollary}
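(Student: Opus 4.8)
The plan is to specialize Proposition \ref{prop_MB}(3). The sole role of the characteristic zero hypothesis is to trivialize the characteristic exponent: since $\mathrm{char}(k)=0$, the residue field $\kappa$ has characteristic zero as well, so $p=1$ and every power $p^r$ occurring in Proposition \ref{prop_MB} equals $1$.

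Before invoking that proposition I would verify its standing hypothesis that $T$ be locally isotrivial. In characteristic zero this is automatic: since $\mu_\ell$ is \'etale, the $\ell$-torsion $T[\ell]$ is finite \'etale over $X$, so the cocharacter sheaf of $T$ is locally constant for the \'etale topology. On a connected open $U\subseteq X$ it thus corresponds to a continuous representation of the profinite \'etale fundamental group $\pi_1(U)$ on $\ZZ^n$, whose image is finite because $\pi_1(U)$ is compact and $\ZZ^n$ discrete; the associated finite \'etale Galois cover of $U$ splits $T_U$. Hence $T$ is locally isotrivial (compare \cite{SGA3}). Moreover, when $X$ is connected this very argument produces a single finite \'etale Galois cover $X'\to X$ splitting $T$, so that $T$ quasi-splits after $X'$ and Corollary \ref{cor1_torus} applies verbatim, its prime-to-$p$ degree hypothesis being vacuous when $p=1$.

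With $p=1$ and $T$ locally isotrivial, the conclusion is immediate: Proposition \ref{prop_MB}(3) yields an integer $r\geq 0$ with $p^r\,\ker\bigl(H^1(X,T)\to H^1(X_\kappa,T)\bigr)\subseteq H^1_{Zar}(X,T)$, and since $p^r=1$ this is exactly the desired inclusion $\ker\bigl(H^1(X,T)\to H^1(X_\kappa,T)\bigr)\subseteq H^1_{Zar}(X,T)$. The substantive content is therefore entirely packaged in Proposition \ref{prop_MB}: the $\ell$-divisibility coming from the proper base change theorem, combined with the transfer argument of Lemma \ref{lem_MB} across a splitting cover. I expect the only point needing care to be this local isotriviality input, since the mere divisibility of the kernel by every prime (Proposition \ref{prop_MB}(1)) does not by itself place it inside $H^1_{Zar}(X,T)$; one genuinely needs the norm map attached to a finite \'etale cover that splits $T$.
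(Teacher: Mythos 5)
Your main deduction is exactly the paper's (implicit) proof: the corollary is stated without proof precisely because in characteristic zero the characteristic exponent is $p=1$, so the factor $p^r$ in Proposition \ref{prop_MB}.(3) equals $1$ and the inclusion is immediate; equivalently, the prime-to-$p$ degree condition in Corollary \ref{cor1_torus} becomes vacuous. That part of your argument is correct and needs no further comment.

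The genuine gap is your claim that local isotriviality of $T$ is \emph{automatic} in characteristic zero. Your argument asserts that the cocharacter sheaf, being locally constant for the \'etale topology, corresponds on a connected open $U$ to a continuous representation of the profinite group $\pi_1(U)$ on $\ZZ^n$, whose image must then be finite. This dictionary is valid for locally constant sheaves with \emph{finite} fibers, but it fails for fibers such as $\ZZ^n$: a locally constant \'etale sheaf with infinite fibers need not be trivialized by any finite \'etale cover, and its monodromy need not factor through the profinite fundamental group. Concretely, let $X$ be a nodal curve (a cycle of copies of $\PP^1$, or the nodal cubic) over a field of characteristic zero, or such a curve spread out over $R=\CC[[t]]$; gluing $\GG_{m}^2$ along the normalization by an infinite-order matrix $A\in\GL_2(\ZZ)$ yields an $X$-torus split only by the infinite chain covers, hence not locally isotrivial --- in characteristic zero, and with $T[\ell]$ finite \'etale (that remark of yours is in any case a red herring: the cocharacter sheaf of \emph{any} torus is \'etale-locally constant by definition, in every characteristic). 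This is exactly why the isotriviality theorem \cite[X.5.16]{SGA3} assumes the base normal, and why the paper treats isotriviality as a hypothesis rather than a fact: case (II) of the main theorem allows non-normal $C$, and Theorem \ref{thm_red} imposes splitting of $\rad(G)$ by an explicit cover as hypothesis (ii). So Corollary \ref{cor2_torus} must be read as retaining the standing hypothesis of Proposition \ref{prop_MB}.(3) that $T$ is locally isotrivial (or else you must add normality of $X$ and invoke \cite[X.5.16]{SGA3}); with that hypothesis kept, your one-line specialization $p^r=1$ is complete and agrees with the paper.
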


\section{Infinitesimal Kneser-Tits problem}\label{section_KT}

For a semisimple group scheme
$G$ defined over a semilocal ring $R$, we are interested 
in the quotient of $G(R)$ by the subgroup  generated by the unipotent radicals
of parabolic subgroups of $G$.

\subsection{Strictly proper parabolic subgroups}

Let $G$ be a reductive group over an algebraically closed field $k$.
We have a decomposition of its adjoint group 
$$
G_{ad} \simlgr  G_{1} \times \dots \times G_{r_s}, 
$$
where the $G_i$'s are adjoint simple $k$-groups. 
Let $Q$ be a parabolic subgroup of $G$. Then  $Q/C(G)$ is 
a   parabolic subgroup  of  
 $G_{ad}$  and decomposes as  a product of parabolic subgroups
$\prod_i Q_i$.  We say that $Q$ is a {\it strictly proper} parabolic subgroup of 
$G$ if   $Q_i \subsetneq G_i$ for $i=1,\dots,r_s$.

Let $S$ be a scheme and  let $G$ be a reductive $S$--group scheme.
Let $P$ be a parabolic subgroup scheme of $G$. We say that 
$P$ is {\it strictly proper} is 
for each point $s \in S$, $P_{\ol{\kappa(s)}}$ 
is a strictly proper parabolic subgroup of $G_{\ol{\kappa(s)}}$.

This definition is stable under base change and is local for the fppf topology.

\subsection{Last term of Demazure's filtration}\label{subsec_last} 
We continue with the $S$--parabolic subgroup $P$ scheme of $G$
assumed strictly proper.
%Our goal is to construct  a nice vector $S$--subgroup scheme
%of $P$. 
We consider   
Demazure's filtration of the unipotent radical $U=\rad_u(P)$ 
$$
U =U_0 \supset U_1 \dots \supset U_n \supsetneq 0
$$
by vector subgroup schemes which are characteristic in $P$ \cite[XXVI.2.1]{SGA3}.
The last $U_n$ is central in $U$.
If the Cartan-Killing type of $G$ is  constant and connected
and if $P$ is of constant type, the last term  $U_n$ is the right object for our purpose. 
This construction does not behave well under  products
and we need to refine  it; it is enough to deal with the adjoint case
since the unipotent radical of $P$ and $P/C(G)$ are isomorphic.

 \begin{slemma} \label{lem_last}  
 Assume that $G$ is adjoint. The group scheme 
 $U=\rad_u(P)$  admits  a unique closed $S$--subgroup scheme $U_{last}$ satisfying the following property:
 
 \smallskip
 
 {\rm For each $S$--scheme $Y$ such that $G_Y$, $P_Y$ are
   of constant type and 
 such that  \break $G \times_S Y \, {\buildrel \phi \over \cong} \, G_1 \times_Y  G_2 \dots \times_Y G_c$ where $G_i$ is an adjoint semisimple
 $Y$--scheme  whose absolute Cartan-Killing type is  connected, then 
 $\phi(U_{last,Y})= U_{1,last} \times_Y \dots \times_Y U_{c,last}$ where $U_{i,last}$ is the last term 
 of  Demazure's filtration of $P_Y \cap G_i$ for $i=1,\dots,c$.
 }
 \smallskip
 
 Furthermore, $U_{last}$ is a vector $S$-group scheme, is central in $U$
 and is $\Aut(G,P)$--equivariant.
\end{slemma}

\begin{proof} Without loss of generality, we can assume that $G$ and $P$ are of constant type. 
Since the required properties are local for the \'etale topology 
on $S$, it is convenient to reason by a descent argument.

 We assume first that $G= G_1 \times_S  G_2 \dots \times_S G_c$ where $G_i$ is an adjoint semisimple
 $S$--scheme  whose absolute Cartan-Killing type is  connected.
 We have $P={P_1 \times_S P_2 \cdots \times_S P_c}$ and 
 put $U_{last}=  U_{1,last} \times_S \dots \times_S U_{c,last}$.
 This is a vector $S$--group scheme, central in $U=\rad_u(P)$ and 
 we claim that it is $\Aut(G,P)$-equivariant.
 
 Let $\phi \in \Aut(G,P)(S)$. Up to localization, we may assume that there exist a permutation 
 $\sigma$ in $c$ letters and
 $S$--isomorphisms $\phi_i: G_{\sigma{(i)}} \simlgr G_i$ ($i=1,\dots,c$)  such that 
 $\phi( g_1,\dots, g_c)= \bigl( \phi_1( g_{\sigma(1)}), \dots  , \phi_r( g_{\sigma(r)}) \bigr)$.
 Since each $U_{i,last}$ is characteristic in $P_i$, it follows that 
 $\phi( U_{last})=U_{last}$.
 This shows that    
$U_{last}$ satisfies the requirement. This is 
a vector $S$-group scheme  which is central in $U=\rad_u(P)$ by construction.

\smallskip

\noindent{\it General case.} Locally for the \'etale topology 
over $S$, $(G,P)$ is isomorphic to $(G_0, P_0)$ where $G_0$ is an adjoint Chevalley $S$-group
scheme  and $P_0$ is a standard parabolic subgroup 
of $G_0$ \cite[XXII.2.3 and XXVI.3.3]{SGA3}.
We denote by $U_0$ its unipotent radical and  by $U_{0, last} \subset U_0$
the $S$-subgroup defined in  the first case. 
By faithfully flat descent $U_{0, last}$ descends 
to $U_{last} \subset P$ and  satisfies the required property. 
\end{proof}

\subsection{Subgroups attached to parabolic subgroups}\label{subsec_attached} 

Let $R$ be a ring and
let $G$ be a reductive $R$--group scheme.
Let $P$ be an $R$--parabolic subgroup of $G$; $P$ admits a  Levi subgroup $L$
\cite[XXVI.2.3]{SGA3}. We consider   
the corresponding  opposite $R$--parabolic  subgroup $P^-$  to $P$.
We denote by $E_P(R)$ the  subgroup of $G(R)$ which is generated
by $\rad_u(P)(R)$ and $\rad_u(P^{-})(R)$; it does not depend of the choice of $L$
since Levi subgroups of $P$ are  $\rad_u(P)(R)$-conjugated.

\begin{sremark} {\rm  If $R$ is a field, and $P$ is a strictly proper
parabolic $R$-subgroup, then $E_P(R)$  does not depend on  the choice of $P$.
In this case,  the group $E_P(R)=G^{+,P}(R)$ is denoted by $G^{+}(R)$ \cite[prop.\ 6.2]{BT}.
}
\end{sremark}

\subsection{Generation of the Lie algebra: the field case}
Let $F$ be a field.
If $F$ is finite, 
we use the notation $F_{r}$ for the unique extension of $F$ of degree $r$.

\begin{slemma}\label{lem_root2} Let $G$ be a simply connected semisimple algebraic $F$-group
with Lie algebra $\gg$.
Let $P$ be a strictly proper parabolic $F$--subgroup. Let  
$U_{last}$ be the $F$--subgroup of $\rad_u(P)$ constructed in Lemma \ref{lem_last}.

\smallskip

\noindent (1) If $G$ is split, then  $G(F) \cdot \Lie(U_{last})$ generates 
the $F$-vector space $\gg$.

\smallskip

\noindent (2)  $\gg$ is the unique $G$--submodule  of 
 $\gg$ containing $\Lie(U_{last})$.

\smallskip

\noindent (3) If $F$ is infinite, then 
$E_P(F). \Lie(U_{last})$ generates 
the $F$-vector space $\gg$.

\smallskip

\noindent (4) If $F$ is finite, we have $E_P(F_r)=G(F_r)$ for each  $r \geq 1$ and 
the quantity 
$$
r_F(G,P)= \Inf\Bigl\{ r \geq 1 \, \mid \, \hbox{\enskip the $F_{r}$-vector space \enskip} 
\gg \otimes_{F} F_{r}
\hbox{\enskip is generated by $E_P(F_r) \cdot  \Lie(U_{last})(F_r)$} \Bigr\}
$$
 is $<\infty$.

\end{slemma}

\begin{proof}
(1) We can assume  that $G$ is almost simple.
Let $B$ be a Borel subgroup of $P$ and let $T$
be a maximal $F$--torus of $B$. 
Let $\alpha$ be the maximal root of $\Phi(G,T)$, we have $U_{\alpha} \subset U_{last}$ by construction.
Since $\alpha$ is a long root, 
the Lie subalgebra $\gg_\alpha=\Lie(U_\alpha)$ is called a long root subalgebra.
 Since roots of maximal length are conjugated under the Weyl group
and since maximal split tori of $G$ are $G(F)$-conjugated, 
it follows that all long root subalgebras  are 
$G(F)$--conjugated. According to \cite[lemma 1.1]{V} (based on \cite{Hi}), if $G$ 
is not of rank one,
the long root subalgebras generate the $F$--vector space $\gg$.
Thus  $G(F) \cdot \Lie(U_{last})$ generates 
the $F$-vector space $\gg$. 

It remains then to deal with the 
case of $\SL_2$ and its standard Borel subgroup  $P=B$.
We observe that   $\begin{pmatrix}
0 & 1\\
0 & 0
\end{pmatrix}$,
$\begin{pmatrix}
0 & 0\\
1& 0
\end{pmatrix}$ and $\begin{pmatrix}
-1 & 1\\
-1& 1
\end{pmatrix}$ are long root elements and form a $F$--basis of $\gsl_2$.

\smallskip

\noindent (2) We can assume that $F$ is algebraically closed so that the statement readily  follows from (1).

 \smallskip
 
 \noindent (3) This follows from the fact that $E_P(F)$ is Zariski dense in $G$ \cite[cor. 6.9]{BT}.

 \smallskip
 
 \noindent (4) Since $F$ is finite, $G$ is quasi-split and we have $E_P(F_r)=G(F_r)$ for each $r \geq 1$ according to \cite[1.1.2]{T}.
  Then (1) shows that $G( F_{\infty})\cdot \Lie(U_{last})(F_{\infty})$ generates 
 the $F_{\infty}$-vector space $\gg \otimes_{F} F_{\infty}$.
 Then there exists $r \geq 1$ such that  $G( F_{r})\cdot \Lie(U_{last})(F_r)$ generates 
 the  $F_{\infty}$-vector space  $\gg \otimes_{F} F_{\infty}$
 and a fortiori    the  $F_{r}$-vector space  $\gg \otimes_{F} F_{r}$.
\end{proof}

\begin{sremark} \label{rem_root3} {\rm Under the  hypothesis of Lemma \ref{lem_root2}, we assume furthermore that $F$ is finite field.

\smallskip

\noindent (a)  We have $r_{F_u}(G,P) = \Sup( 1,  r_{F}(G,P) - u)$ for each $u \geq 1$.

\smallskip

\noindent (b)  If  $G$ is split, Lemma \ref{lem_root2}.(1) is rephrased by the formula
$r_F(G,P)=1$.

 }
\end{sremark}

\subsection{Generation of the Lie algebra: Case of a semilocal ring}

The next statement is a  variation
of a result of Borel-Tits on the Whitehead groups over local fields 
\cite[prop.\ 6.14]{BT}.

\newpage

\begin{slemma}\label{lem_prod} Let $R$ be a semilocal ring. Let $G$ be a semisimple  $R$--group scheme equipped with 
a strictly proper $R$--parabolic subgroup $P$ of $G$.
We assume that the fundamental group of $G$ is \'etale.
Let $U_{last}$ be the $R$--subgroup $P$ defined in Lemma \ref{lem_last}.
Let $s_1, \dots, s_t$ be the closed points of $\Spec(R)$.
For each $j$ such that 
$\kappa(s_j)$ is finite 
we assume that $G_{\kappa(s_j)}$  that $r(G_{\kappa(s_j)}, P_{\kappa(s_j)})=1$.

\smallskip

\noindent (1) There exist $g_1, \dots, g_m \in E_P(R)$ such that the 
product map
$$
h: (U_{last})^m \to G, \enskip (u_1,\dots, u_m) \, \mapsto  \,  {^{g_1}\!u_1} \dots  
{^{g_m}\!u_m}
$$
is smooth  at $(1,\dots,1)_{s_j}$  for $j=1,\dots,n$.

\smallskip

\noindent (2) 
%If $R$ is semilocal,
The map $dh: \Lie(U_{last})(R)^m \to \Lie(G)(R)$ 
is onto.

\end{slemma}

\begin{proof} 
We denote by $\gm_1, \dots, \gm_t$ the maximal ideals of $R$,  by 
$\kappa_j=\kappa(s_j)=R/\gm_j$ for $j=1,\dots,t$.

\smallskip

\noindent (1) The hypothesis on the fundamental group of $G$
implies that $G^{sc} \to G$ is \'etale 
and then  reduces to the simply connected case.
Lemma \ref{lem_root2}.(1) implies that 
${E_P(\kappa_j) \cdot \Lie(U_{last})(\kappa_j)}$ generates the 
$\kappa_j$--vector space  $\Lie(G)(\kappa_j)$ if $\kappa_j$ is infinite
and this holds as well in the finite case granting to our assumption
$r( G_{\kappa_j}, P_{\kappa_j})=1$.

\begin{sclaim}\label{claim_KT} The map $E_P(R) \to \prod_j E_P(\kappa(s_j))$ is onto. 
\end{sclaim}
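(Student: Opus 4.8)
The plan is to lift generators, reducing everything to the two unipotent radicals. Since each residue field $\kappa(s_j)$ is a field, the Remark preceding Lemma \ref{lem_prod} identifies $G^{+,P}(\kappa(s_j))$ with $E_P(\kappa(s_j))$, the subgroup of $G(\kappa(s_j))$ generated by $\rad(P)(\kappa(s_j))$ and $\rad(P^-)(\kappa(s_j))$. Hence it suffices to prove that the two maps $\rad(P)(R) \to \prod_j \rad(P)(\kappa(s_j))$ and $\rad(P^-)(R) \to \prod_j \rad(P^-)(\kappa(s_j))$ are onto, and then to reassemble a global element inside $E_P(R) \subseteq G^{+,P}(R)$. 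Before doing so I would reduce to the case where $R$ is semilocal with the relevant points being its maximal ideals. This is legitimate because the only use of the claim is to make the product map $h$ of Lemma \ref{lem_prod} smooth at $(1,\dots,1)_{s_j}$: the smooth locus is open, hence stable under generization, so it is enough to realize the required elements at a single closed specialization $t_j$ of each $s_j$, and smoothness at $t_j$ then forces it at $s_j$. Replacing $R$ by its semilocalization at the finite set $\{t_j\}$, which does not change the residue fields, I may assume the points in question are the maximal ideals of a semilocal ring.

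For the lifting itself the key geometric input is that $\rad(P)$ and $\rad(P^-)$ are smooth affine $R$-group schemes which, by Demazure's filtration \cite[XXVI.2.1]{SGA3}, are iterated extensions of vector groups $\WW(\cE_i)$ with $\cE_i$ locally free. Over a semilocal ring each $\cE_i$ is free and the successive extensions split as schemes, their classes lying in the cohomology of an affine space over $R$ and so vanishing; thus $\rad(P) \cong \A^d_R$ as an $R$-scheme, and likewise for $\rad(P^-)$. Consequently $\rad(P)(R) \to \prod_j \rad(P)(\kappa(s_j))$ is identified with $R^d \to \prod_j \kappa(s_j)^d$, which is onto precisely because $R \to \prod_j \kappa(s_j)$ is onto by the Chinese Remainder Theorem for the maximal ideals of the semilocal ring $R$. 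To reassemble, I would write each target $\gamma_j \in E_P(\kappa(s_j))$ as an alternating word in $\rad(P)(\kappa(s_j))$ and $\rad(P^-)(\kappa(s_j))$, pad the shorter words with identity factors so that all of them share a common alternating pattern $v_1 w_1 \cdots v_L w_L$, and then lift position by position via the two surjections above. The resulting product lies in $E_P(R) \subseteq G^{+,P}(R)$ and reduces to $\gamma_j$ at every $s_j$.

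The genuine obstacle is exactly the passage from the residue fields back to $R$. The surjectivity of $R \to \prod_j \kappa(s_j)$ is false for arbitrary points: for a non-closed point $s$ the image of $R$ in $\kappa(s)$ is a proper subring, so already for $\SL_2$ over a discrete valuation ring with fraction field $K$ one cannot lift an elementary matrix with a denominator, and $G^{+,P}(R)\to G^{+,P}(K)=\SL_2(K)$ fails to be onto. This is precisely what the reduction to closed points in the first paragraph is designed to circumvent, and it is the one point deserving care — namely the interplay between the open smooth locus of $h$ and generization. Everything else is formal: the field case of the Remark turns the normal subgroup $G^{+,P}$ into the explicitly generated group $E_P$, and the affine-space structure of the unipotent radicals turns the lifting into the Chinese Remainder Theorem. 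I would therefore treat the word-assembly and the vanishing of the obstruction classes as routine, and concentrate the argument on the closed-point reduction.
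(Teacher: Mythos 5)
Your core mechanism is the paper's own: it likewise reduces to the surjectivity of $U(R) \to \prod_j U(\kappa(s_j))$ for $U = \rad(P)$ and its opposite, using that over each residue field $G^{+,P}(\kappa(s_j))$ is generated by the two radicals (Borel--Tits, as in the Remark you invoke), and lifts letter by letter. Your detour through splitting the Demazure filtration over a semilocal ring is unnecessary: the paper cites \cite[XXVI.2.5]{SGA3}, which gives $U \cong \WW(\cE)$ as $R$-schemes outright, with no semilocal hypothesis; and freeness of $\cE$ is also not needed, since for pairwise distinct maximal ideals the Chinese Remainder Theorem gives $\prod_j \kappa(s_j) \cong R/J$ with $J = \gm_1 \cap \dots \cap \gm_n$, and $\cE \to \cE/J\cE$ is onto for any module by right exactness of the tensor product. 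The genuine divergence is your closed-point reduction, and your diagnosis there is correct: for arbitrary points the Claim as literally stated is false (your DVR/$\SL_2$ example), and the paper's final sentence --- that $\cE$ maps onto $\prod_j \cE \otimes_R \kappa(s_j)$ --- is valid only when the $s_j$ are pairwise distinct closed points. The paper passes over this in silence; it is harmless in its sole application, Proposition \ref{prop_section}, where the ring is the semilocal $\widehat{A}$ and the $s_j$ are precisely its maximal ideals.

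Two caveats on your repair, though. First, semilocalizing at the $t_j$ produces lifts in $G^{+,P}$ of the localized ring, not of $R$, so you prove a variant of the Claim; your openness-of-the-smooth-locus remark does recover the smoothness conclusion of Lemma \ref{lem_prod}.(1) at the $s_j$, but that Lemma asserts $g_i \in G^{+,P}(R)$, and in Proposition \ref{prop_section} the $g_i$ must live over $\widehat{A}$ itself to define the map $F$ --- there, fortunately, no reduction is needed. Second, a closed specialization $t_j$ need not inherit the residue-field hypothesis of Lemma \ref{lem_prod}: for $R = \ZZ_2$ and $s$ the generic point one has $\kappa(s) = \QQ_2$ infinite but $\kappa(t) = \FF_2$, so the surrounding construction of the $g_{i,j}$ via Lemma \ref{lem_lie} can fail at $t_j$ even when the hypotheses hold at $s_j$; the paper confronts the same difficulty only at the end of the proof of Theorem \ref{thm_main}. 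In short: on the closed-point case your proof agrees with the paper's, your observation about non-closed points identifies a real imprecision in the paper's write-up, but your fix as stated needs these two qualifications.
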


Let $P^{-}$ be an opposite parabolic subgroup scheme of $P$ and let 
$U^-$ be its unipotent radical.
Since $E_P(R)$ (resp.\  each $E_P(\kappa(s_j))$)
is generated (by definition) by $U(R)$ and  $U^{-}(R)$,
it is enough to show the surjectivity of 
$U(R) \to \prod_j U(\kappa(s_j))$. 
According to \cite[XXVI.2.5]{SGA3}, there exists
a finitely generated  locally free $R$--module $\cE$ such that 
$U$ is isomorphic to $\WW(\cE)$ as  $R$-scheme.
Since $\WW(\cE)(R)= \cE$ maps onto $\prod_j \WW(\cE)(\kappa(s_j))
=  \prod_j \cE  \otimes_R \kappa(s_j)$, the Claim is established.

There are $g_1, \dots, g_m \in  E_P(R)$
such that $\Lie(G)(\kappa(s_j))$ 
is generated by the $^{g_{i}}\!\Lie(U_{last})(\kappa(s_j))$ for $j=1,\dots,t$.
The differential of the product map
$$
h: (U_{last})^m \to G, \enskip (u_1,\dots, u_m) \, \mapsto \, {^{g_1}\!u_1} \dots  
{^{g_m}\!u_m}
$$
is
$\Lie(U_{last})^m \to \Lie(G), \enskip (x_1,\dots, x_m) \mapsto {^{g_1}\! x_1} \dots  
{^{g_m}\! x_m}$.  It is  onto by construction; we conclude that  $h$ is smooth at 
$(1,\dots,1)_{s_j}$ for $j=1,\dots,t$. 

\smallskip

\noindent (2) 
Then $J={\gm_1\cap \dots \cap \gm_t}$ is the Jacobson radical 
of $R$ and $R/J \cong  R/\gm_1  \times \dots R/\gm_t$.
Statement (1) shows that 
the map  $dh: \Lie(U_{last})(R)^m \to \Lie(G)(R)$ is surjective
modulo $\gm_i$ for $i=1,\dots,t$ so is surjective modulo $J$.
Since $\Lie(G)(R)$ is finitely generated, Nakayama's lemma
\cite[II.4.2.3]{Ks}  enables us to conclude that   $dh$ is onto.

\end{proof}

\section{Moduli stack of $G$--torsors}\label{section_moduli}

\subsection{Setting} Let $S$ be a noetherian separated base scheme.

Let $f: X \to S$ be a  proper  flat (finitely presented)
scheme   satisfying the resolution property fppf locally over $S$, i.e. every quasi-coherent $\cO_X$-module 
of finite type is the quotient of a finite locally free $\cO_X$-module fppf locally over $S$ \cite[Tag 0F86]{St}. 
This property is satisfied if $X$ is projective over $S$ 
and  also if $X$ is a divisorial scheme  \cite[II.2.24]{SGA6}.
This applies in particular to the case $X$ noetherian regular ({\it ibid}, II.2.7.1.1).

% Since $S$ is noetherian, $X \to S$ is of finite presentation \cite[Tag 01T7]{St}.

% We relax the property by requiring to hold locally for fppf.

\begin{slemma}\label{lem_nitsure}  Let $Y \to X$ be an affine scheme  of finite type.
 Then the $S$--functor $\prod_{X/S}Y$ is representable by an affine $S$-scheme
 of finite presentation.
\end{slemma}

\begin{proof} In the projective case, this is \cite[\S 1.4]{Hd2}. 
According to the faithfully flat descent theorem, this assumption is local 
for the flat topology. This permits to assume that 
$f: X \to S$ satisfies the resolution property.
This matters  in the following basic statement \cite[7.7.8, II.7.9]{EGA3},
see also \cite[\S 5.3, Th. 5.8]{FGAE} and its proof: if $\cF$ is  flat coherent sheaf over $X$, 
the fppf $S$-sheaf $T \mapsto H^0( X \times_S T, \cF_{X \times_S T})$ is representable
by a linear $S$-scheme.

According to \cite[1.7.15]{EGA2}, there  exists a closed immersion
$Y \to V( \cF)$ where $\cF$ is a quasi-coherent $\cO_X$--module of finite type.
The resolution property allows us to assume
that $\cF$ is a quotient of a finite  locally free $\cO_S$-module $\cE$.
Since $V( \cF) \to V(\cE)$ is a closed immersion \cite[9.4.11.(v)]{EGA1}, we get a closed embedding
$Y \to V(\cE)$. 
For each $S$-scheme $T$, we have 
$\Bigl( \prod_{X/S}V(\cE) \Bigr)(T)= V(\cE)( X\times_S T)= { H^0\bigl( X \times_S T, \cE^\vee_{X \times_S T} \bigr)}$.
Since $\cE^\vee$ is a coherent flat module over $X$, the 
$S$-functor $\prod_{X/S}V(\cE)$ is representable by an affine 
$S$--scheme. According to \cite[\S 7.6, prop. 2.(2)]{BLR}, $\prod_{X/S}Y$ is representable by a closed
$S$--subscheme of $\prod_{X/S}V(\cE)$.

The $S$--scheme $\prod_{X/S}Y$ is locally of finite presentation since its functor of points commutes 
with colimits;  the affine $S$--scheme $\prod_{X/S}Y$ is then of finite presentation.
\end{proof}

 Let $G$ be a  smooth affine group scheme over $X$.
%Let $p: C \to S$ be a  projective flat
%relative curve whose  geometric fibers are algebraic curves.
%We assume that $C$ is integral and that
%the map $\cO_S \to p_*(\cO_C)$ is universally an  isomorphism.
Let $BG=[X/G]$ the algebraic $X$-stack of $G$--bundles on $X$ \cite[8.1.14]{O};
it is smooth \cite[Tag 0DLS]{St}.
For each $X$-scheme $Y$, $B_G(Y)$ is the groupoid of $G_Y$-torsors.
We denote by $\Bun_G= \prod_{X/S}B_G$ the 
$S$--stack of $G$-bundles, i.e. $\Bun_G(T)=  B_G( X \times_S T)$ for each
$S$--scheme $T$.

\begin{slemma}\label{lem_weil}
 Let $P$, $Q$ be two $G$-torsors over $X$. Then 
the fppf $S$--sheaf 
$$
T \mapsto \Isom_{G_{X \times_S T}}\bigl( P_{X \times_S T}, Q_{X \times_S T} \bigr)
$$
is representable by an $S$-scheme  $\Isom^\flat(P,Q)$ 
which is  affine of finite presentation.
\end{slemma}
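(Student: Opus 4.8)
The functor in question is nothing but the Weil restriction along $p_Y \colon C_Y \to Y$ of the $C_Y$--scheme $\uIsom_{G_{C_Y}}(P,Q)$ of $G$--equivariant isomorphisms $P \to Q$: a $Y'$--point is exactly a section over $C_{Y'}$ of $\uIsom_G(P,Q)_{C_{Y'}} \to C_{Y'}$. Since $C_Y \to Y$ is projective but not finite, this Weil restriction is not representable in general, and the whole point is to exploit hypotheses (I) and (II). The plan is to linearize everything through the closed immersion $i \colon G \hookrightarrow \GL(\cE)$: I first represent the analogous isomorphism functor for the associated vector bundles, and then carve out the $G$--isomorphisms as a closed subfunctor using that $\GL(\cE)/G$ is affine.

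So let $\mathcal{V}_P$ (resp.\ $\mathcal{V}_Q$) be the vector bundle on $C_Y$ attached to the $\GL(\cE)$--torsor $i_*P = P \times^{G}\GL(\cE)$ (resp.\ $i_*Q$). Pushing forward along $i$ identifies $\GL(\cE)$--torsor isomorphisms with $\cO_{C_Y}$--linear isomorphisms $\mathcal{V}_P \to \mathcal{V}_Q$, so I first treat $Y' \mapsto \Isom_{\cO_{C_{Y'}}}(\mathcal{V}_P,\mathcal{V}_Q)$. For this I use Grothendieck's representability of the direct image: because $\mathcal{V}_P^\vee \otimes \mathcal{V}_Q$ is locally free on $C_Y$, hence flat over $Y$, and $p$ is proper, there is a coherent $\cO_Y$--module $\mathcal{Q}$ with $H^0(C_{Y'}, (\mathcal{V}_P^\vee\otimes\mathcal{V}_Q)_{Y'}) = \Hom_{\cO_{Y'}}(\mathcal{Q}_{Y'},\cO_{Y'})$ functorially in $Y'$; thus $Y'\mapsto \Hom_{\cO_{C_{Y'}}}(\mathcal{V}_P,\mathcal{V}_Q)$ is representable by the affine finitely presented $Y$--scheme $\VV(\mathcal{Q})$ of convention (b). Running the same argument for $\Hom(\mathcal{V}_Q,\mathcal{V}_P)$ and imposing that a pair $(\psi,\chi)$ satisfy $\chi\psi=\id_{\mathcal{V}_P}$ and $\psi\chi=\id_{\mathcal{V}_Q}$ — two closed conditions, being equalities of morphisms into the affine schemes representing $\Hom(\mathcal{V}_P,\mathcal{V}_P)$ and $\Hom(\mathcal{V}_Q,\mathcal{V}_Q)$ — exhibits $\Isom^\flat(\mathcal{V}_P,\mathcal{V}_Q)$ as a closed subscheme of $\VV(\mathcal{Q})\times_Y\VV(\mathcal{Q}')$, hence affine and of finite presentation over $Y$.

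It remains to descend from $\GL(\cE)$ to $G$. The reductions of $i_*P$ and $i_*Q$ to $G$ are encoded by sections $\sigma_P,\sigma_Q$ of the associated $(\GL(\cE)/G)$--bundles $Z_P, Z_Q \to C_Y$, which are affine over $C_Y$ by hypothesis (II). Under the dictionary above, a $\GL(\cE)$--isomorphism $\psi \colon \mathcal{V}_P \to \mathcal{V}_Q$ descends to a (necessarily unique) $G$--isomorphism $P\to Q$ precisely when the induced isomorphism $Z_P \to Z_Q$ carries $\sigma_P$ to $\sigma_Q$. Hence $\Isom^\flat(P,Q)$ is the subfunctor of $R:=\Isom^\flat(\mathcal{V}_P,\mathcal{V}_Q)$ defined by the equation $\psi_*\sigma_P = \sigma_Q$, and I must check that this is a closed condition. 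Over the universal isomorphism $\psi$ on $C_R$, the two sections $\psi_*\sigma_P$ and $\sigma_Q$ are sections over $C_R$ of the affine $C_R$--scheme $Z_{Q,C_R}$; since $Z_Q$ is affine of finite type over the relatively projective $C_Y$, I may embed it as a closed subscheme of a vector bundle $\WW(\cF)$ on $C_Y$, so that $\psi_*\sigma_P - \sigma_Q$ becomes a global section of the locally free sheaf $\cF$ on $C_R$. Representing $H^0(C_{R'}, \cF_{R'})$ by a linear scheme $\VV(\mathcal{Q}'')$ as before, this section is a morphism $R \to \VV(\mathcal{Q}'')$, and the locus where it vanishes after base change is the preimage of the zero section; as the zero section is a closed immersion, this locus is closed in $R$. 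Therefore $\Isom^\flat(P,Q)$ is a closed subscheme of $R$, and in particular affine and of finite presentation over $Y$.

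The main obstacle is exactly this last closedness: a priori the compatibility with the $G$--reduction is a condition on sections of an affine, non-linear bundle over the curve, and it is the affineness assumption (II) together with the relative ampleness on $C$ that let me linearize it into the vanishing of a section of a vector bundle, where the zero-section-is-closed argument applies. Two routine points I would dispatch separately: first, a standard spreading-out reduces us to $Y$ of finite type over the noetherian $S$, so that the sheaves $\mathcal{Q}, \mathcal{Q}', \mathcal{Q}''$ are coherent and the representing schemes are of finite presentation; second, the cohomological flatness of $p$ fixed in the setting guarantees that the formation of these direct images, hence of the representing linear schemes, commutes with the base changes $Y' \to Y$ used to verify the functorial identities.
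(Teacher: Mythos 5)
Your proof is correct, but it takes a genuinely different route from the paper's. The paper's argument is short and citation-driven: since $P$ and $Q$ are \'etale-locally trivial and $G$ is affine, faithfully flat descent makes the $C_Y$--functor of $G$--isomorphisms representable by an affine smooth $C_Y$--scheme $\mathbf{Isom}_G(P,Q)$ (a twisted form of $G$); the lemma's functor is then the Weil restriction $\prod_{C_Y/Y}\bigl(\mathbf{Isom}_G(P,Q)/C_Y\bigr)$, whose representability is Grothendieck's theorem \cite[TDTE, \S C.2]{FGA}, with affineness supplied by Harder's argument \cite[\S 1.4]{Hd1} and finite presentation by the fact that the sections functor is locally of finite presentation. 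What you do instead is unwind those citations by hand: you linearize through $i\colon G\hookrightarrow \GL(\cE)$, represent the Hom functor of the associated vector bundles by a linear scheme $\VV(\mathcal{Q})$ (EGA III 7.7.8--7.7.9), cut out isomorphisms by the closed conditions $\chi\psi=\id$, $\psi\chi=\id$, and then cut out the $G$--isomorphisms as the closed locus where the sections of the affine twisted $(\GL(\cE)/G)$--bundles match, the matching condition itself being linearized by a closed embedding $Z_Q\hookrightarrow \WW(\cF)$ and the zero-section argument. This is in substance Harder's own method, so your proof is a self-contained replacement for the black boxes, at the cost of being longer and of invoking the standing hypotheses on the representation $i$ (closed immersion, $\GL(\cE)/G$ affine) which the paper's proof of this particular lemma does not need: descent already makes $\uIsom_G(P,Q)$ affine over $C_Y$ because $G$ itself is affine. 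For the same reason your framing sentence is inaccurate: the Weil restriction along $C_Y\to Y$ of an \emph{affine} finitely presented scheme is representable in this setting without any appeal to the embedding --- that is exactly the FGA result the paper cites --- so hypotheses (I) and (II) are an artifact of your linearization strategy rather than ``the whole point.'' The remaining details you flag as routine genuinely are: equalizers into separated (affine) $Y$--schemes are closed; the representability of the sections functor carries its base-change compatibility with it (cohomological flatness in degree zero is only needed if one wants $\mathcal{Q}$ locally free, which you never use); and the limit/spreading-out reduction to $Y$ of finite type over the noetherian $S$ legitimizes the coherence and finite-presentation claims, matching the paper's own finite-presentation step.
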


\begin{proof}  
Since the $G$--torsors $P$ and $Q$ are locally isomorphic over $X$ to $G_{X}$ with respect to the \'etale topology,
the faithfully flat descent theorem shows that the 
$X$--functor
$$
D \mapsto 
\Isom_{G\times_C D}\bigl( P \times_{X} D, Q \times_{X} D \bigr) 
$$
is representable by an affine smooth $X$--scheme.
We denote it by $\mathbf{Isom}_G(P,Q)$.
The $S$--functor
$$
T \mapsto \Isom_{G_{X_{T}}}\bigl( P_{X_{T}}, Q_{X_{T}} \bigr)
$$
is  nothing but the scalar restriction 
$\prod\limits_{X/S}\mathbf{Isom}_G(P,Q)$.
According to Lemma \ref{lem_nitsure}, this $S$-functor is representable by a
an affine $S$-scheme of finite presentation.
\end{proof}

\begin{sproposition}\label{prop_stack}  The $S$--stack $\Bun_G$ is 
 algebraic  locally of finite type with affine diagonal.
\end{sproposition}

\begin{proof}
(1) This is an application of the general result by Hall-Rydh on Weil restriction of algebraic stacks
\cite[Th. 1.2]{HR}. More precisely we deal with   $BG \xrightarrow{g} X \xrightarrow{f}$ and
we claim that 

\smallskip

(*) $f \circ g$ is locally of finite presentation and has affine diagonal.

\smallskip

Since $BG$ is smooth over $X$, $g$ is locally of finite presentation.
Locally of finite presentation is stable under composition so $f \circ g$ is locally of finite presentation.
The affineness of the  diagonal follows from  Lemma \ref{lem_weil}.
In particular, $f \circ g$ has affine stabilizers (i.e. the diagonal of $BG \to S$ 
has affine fibers). The quoted result shows that the $S$--stack $\Bun_G$ is 
 algebraic  locally of finite type and  with affine diagonal.
\end{proof}

We assume from now on that $X=C$ is a relative curve. 
According to \cite[Tag 0DMK]{St}, $C$ is locally $H$--projective (that is embeds 
in a projective space) for the \'etale topology so satisfies 
the resolution property \'etale locally (and a fortiori fppf locally) over $S$.

\begin{sproposition}\label{prop_stack2} The $S$--stack $\Bun_G$ is a smooth 
 algebraic stack locally of finite type with affine diagonal.
\end{sproposition}

\begin{proof} The smoothness remains to be established. We  use the criterion of formal smoothness
\cite[2.6]{He2} (or \cite[Tag 0DNV]{St}). We are given an $S$--algebra $R$ 
which is local Artinian with maximal ideal
$\gm$ such that $\gm^2=0$ and a  $G$-torsor $P_0$
over $C_0=C \times_S R/\gm$. We put $G_0= G_{C_0}$
and denote by $H_0=  P_0 \wedge^{G_0} G_0$ the
twisted group scheme over $C_0$ with respect 
to the action of $G_0$ on itself by inner automorphisms.
According to \cite[th.\ 8.5.9]{I}, 
the obstruction to lift $P_0$ in a 
$G$-torsor  over $C \times_S R$ is a class of
$H^2\bigl(C_0,  \cLie(G_0) \otimes_{\cO_{C_0}} \gm \bigr)$. 
But  $R/\gm=\kappa$ is a field and $C_0$ is of
dimension $1$ so that this group vanishes according 
to  Grothendieck's vanishing theorem
\cite[III.2.7]{Ha}.
The formal smoothness criterion is satisfied so that the 
algebraic stack $\Bun_{G}$ is smooth.
\end{proof}

\subsection{The tangent stack}
We consider now the tangent stack $T(\Bun_G/S)$ \cite[\S 17]{LMB}
which is algebraic ({\it loc. cit.}, 17.16).
By definition, for each $S$--scheme $Y$,
we have $T(\Bun_G/S)(Y)= \Bun_G( Y[\epsilon])$ where
$Y[\epsilon]= Y \times_\ZZ \ZZ[\epsilon]$. It comes with 
two $1$--morphisms 
$$
 \tau : T(\Bun_G/S) \to \Bun_G
$$
and $\sigma: \Bun_G \to T(\Bun_G/S)$.

\begin{sremark}{\rm
We can consider the smooth-\'etale site 
on $\Bun_G$ and the quasi--coherent sheaf
$\Omega^1_{\Bun_G/S}$;  its associated generalized vector
bundle $\bV( \Omega^1_{\Bun_G/S})$  is an algebraic  $S$-stack.
There is a canonical $1$-isomorphism between
 $T(\Bun_G/S)$ and $\bV( \Omega^1_{\Bun_G/S})$ ({\it loc. cit.}, 17.15).
 We shall not use that fact in the paper.
 }
\end{sremark}

Our goal is the understanding of the fiber product of $S$--stacks
$$
  T_b\Bun_G= T(\Bun_G/S)  \times_{ \Bun_G} S
$$
where $b: S \to  \Bun_G$ corresponds to the trivial $G$--bundle $G$ over $C$. 
According to \cite[2.2.2]{LMB}, for each $S$--algebra $B$, 
the fiber  category $T_b\Bun_G(B)$  has for objects the couples
$(F,f)$ where $F$ is a $G_{C_{B}}[\epsilon]$--torsor
and $f: G_{C_{B}} \simlgr F \times_{C_{B[\epsilon]} }C_{B}$ 
is a trivialization of
$G_{C_{B}}$--torsors; an arrow $(F_1, f_1) \to (F_2, f_2)$
is a couple $(H, h)$ where \break $H: F_1 \simlgr F_2$ is an isomorphism
of $G_{C_{B[\epsilon]}}$--torsors and 
$h \in G(C_{B})$ with the compatibility 
$\bigl( H \times_{C_{B[\epsilon]}} C_{B} \bigr) 
\circ f_1= f_2 \circ h$.

\subsection{Relation with the Lie algebra}
\label{relation}

We consider the Weil restriction  $G'= \prod_{C[\epsilon]/C} G_{C[\epsilon]}$, this 
is an affine smooth  $C$--group scheme \cite[A.5.2]{CGP}. It comes with a $C$--group homomorphism
$j: G  \to G'$ and with a  
$C[\epsilon]$-group homomorphism $q: G' \times_C C[\epsilon] \to 
G \times_S S[\epsilon]$.

We consider the functor $\Phi$ between the categories of $G'$-torsors over $C$ 
and that of $G$--torsors over $C[\epsilon]$ defined
by the assignment defined
by the assignment $E' \mapsto 
q_*\bigl( E' \times_C C[\epsilon] \bigr)$.
According  to \cite[VII.1.3.2]{Gd}, the restriction 
map $H^1( V[\epsilon], G_{V[\epsilon]}) \to H^1(V,G)$ is 
bijective for each affine $S$-scheme $V$;
it follows that each $G$--torsor over $C[\epsilon]$
is trivialized by an \'etale cover of $C$ extended to  $C[\epsilon]$.
According to \cite[XXIV.8.2]{SGA3} (see also \cite[III.3.1.1]{Gd}), it follows that 
we can define the functor $\Psi$ 
by the assignment $F' \mapsto  \prod\limits_{C[\epsilon]/C}( F')$.
The functors $\Phi$ and $\Psi$ are inverse of each others so that 
the groupoids  $\Tors_{G'}(C)$ and $\Tors_{G \times_C C[\epsilon]}(C[\epsilon])$
are isomorphic.

We come now to Lie algebras considerations.
By definition of the Lie algebra, the $C$--group $G'$ fits
in a split exact sequence of $C$--group schemes
$$
0  \to \WW(\cLie(G)) \xrightarrow{\e} G' \xrightarrow{\pi} G \to 1
$$
where $\cLie(G)=\omega_{G/S}^\vee$ (see \S \ref{app_lie} and  Remark \ref{rem_lie2}.(a)).

According to \cite[III.3.2.1]{Gd} we have an equivalence of groupoids between
$\Tors_{\WW(\cLie(G))}(C)$ and
that of couples $(E', \eta)$ where $E'$ is  a 
$G'$--torsor over $C$ and 
$\eta: G \simlgr \pi_*{E'}$ is a trivialization.
Taking into account the previous isomorphism of categories,
we get  then an equivalence of groupoids between 
$\Tors_{\WW(\cLie(G))}(C)$ and
that of couples $(F, \xi)$ where $F$ is  a 
$G$--torsor over $C[\epsilon]$ and 
$\xi: G_C \to F \times_{C} C[\epsilon]$ is a trivialization;
the morphisms are clear.

We come back now to the previous section involving a 
$S$--algebra $R$ and the morphism $b:\Spec(R) \to \Bun_G$ associated
to the trivial $G$--torsor. By comparison it follows that 
the fiber category $T_b\Bun_G(R)$ is equivalent
to $\Tors_{ \WW(\cLie(G))(C)}$.

\section{Uniformization and local triviality}\label{section_unif}

This section presents in a slightly more general manner than
classical material on uniformization of $G$--bundles \cite{BL1, He1, He2, So}.

\subsection{Loop groups}
We continue with the framework of the previous section and  assume from now 
on  that $S=\Spec(R)$ is affine noetherian.
 We deal with   a (proper flat) 
relative curve $p: C \to \Spec(R)$; 
it satisfies  \'etale locally
the resolution property since it is locally $H$-projective \cite[Tag 0E6F]{St}. 

\medskip 

Let  $D$ be a  finite flat $S$--scheme  with
a closed embedding ${s: D \to C}$ such that 

\smallskip

(i) the complement $C \setminus D$ is affine and 
will be denoted by $V^D$; 

%$D$ is an effective Cartier divisor which is ample;

\smallskip

(ii) $s$ factorizes through an affine $R$--subcheme $V$ of 
$C$. 

\smallskip 

Note that (i) is satisfied if $D$ is an effective Cartier divisor which is ample.
Let $V=\Spec(A)$, $V^D=\Spec(A_D)$ and
$V \cap V^D  =\Spec(A_\sharp)$;
this intersection is affine because the morphism $C \to S$
is  separated \cite[Tag 01KP]{St}. 
We denote by $I \subset A$ the ideal defining $D$.
We consider the completed  ring $\widehat A:= \widehat A_{I}= \limproj_{n} A/I^n$.
We need some basic facts from commutative algebra
 (see \cite[III.4.3,  th.\ 3 and prop.\ 8]{BAC} for (a) and (b)).

\smallskip

\noindent (a) $\widehat A$ is noetherian and flat over $A$.

\smallskip

\noindent (b) The assignment $\gm \mapsto \gm \widehat A$ 
provides a  correspondence between the maximal 
ideals of $A$ containing $I$ and the maximal ideals of $\widehat A$;

\smallskip

\noindent (c) If $R$ is semilocal so is $\widehat A$.

\smallskip

\noindent If  $R$ is local, the finite $R$--algebra
  $A/I$ is semilocal so we get (c) from (b).

\medskip
    
\medskip

  We recall that 
 $G$ is  a  smooth affine group scheme over $C$.
We consider the following $R$--functors defined for each 
$R$--algebra $B$ by:

\medskip

(1) $L^+G(B)=G\Bigl( \widehat{(A \otimes_R B)}_{I  \otimes_R B} \Bigr)$;

\smallskip

(2) $LG(B)=G\Bigl( 
\widehat{(A \otimes_R B)}_{I  \otimes B} \otimes_{A} A_\sharp  \Bigr)$.

\begin{sexample}\label{ex_simple} {\rm  
(a) The simplest example of our situation is 
$C=\PP^1_R = V_{\infty} \cup V = \Spec(R[t]) \cup \Spec(R[t^{-1}])$
and for $D$ the point $0$ of $C$.
In this case, we have $A=R[t]$, $I=t A$, $A_\sharp= R[t, t^{-1}]$ and  
$\widehat A= R[[t]]$. For each $R$--algebra $B$, we have
$\widehat{(A \otimes_R B)}_{I  \otimes B}=B[[t]]$
and  $\widehat{(A \otimes_R B)}_{I  \otimes_R B} \otimes_{A} A_\sharp=
B[[t]] \otimes_{R[t] } R[t, t^{-1}]= B[[t]][\frac{1}{t}]$.
The standard notation for the last ring is $B((t))$.
}
\end{sexample}

\subsection{Patching}
For simplicity we assume that $S=\Spec(R)$ where $R$ is a noetherian ring.
If we are given  an $R$--algebra $B$ (not necessarily noetherian),
we need to deal with the rings
$\widehat{(A \otimes_R B)}_{I  \otimes B}$ 
and $\widehat{(A \otimes_R B)}_{I  \otimes B} \otimes_{A} A_\sharp$.
As pointed out by Bhatt \cite[\S 1.3]{Bh},
the Beauville-Laszlo theorem \cite{BL} states that
one can patch compatible quasi-coherent sheaves on 
$\Spec(\widehat{(A \otimes_R B)}_{I  \otimes B})$ and 
$V^D\times_R B$ to a quasi-coherent sheaf on $C_B$, 
provided the sheaves being patched are flat along $\Spec(B/IB)$. 
In particular the square of functors

\[\xymatrix@1{
\cC(C_B)\ar[r] \ar[d] & \enskip \cC\Bigl(\widehat{(A \otimes_R B)}_{I  \otimes B} \Bigr) \ar[d] \\
  \cC(V^D \times_R B)  \ar[r]& \enskip 
  \cC\Bigl( \widehat{(A \otimes_R B)}_{I  \otimes B} \otimes_A A_\sharp \Bigr) 
}\]
is  cartesian where $\cC(X)$ stands for the category
of flat quasi-coherent sheaves over the scheme $X$ 
(resp.\ the category of flat affine schemes over $X$).
Note that if the ring $B$ is noetherian, Ferrand-Raynaud's patching 
 \cite{FR} (see also \cite{MB}) does the job.

\begin{sproposition}\label{prop_patch}
(1) The square of functors

\[\xymatrix@1{
\Tors_G(C_B)\ar[r] \ar[d] & \enskip \Tors_G\Bigl(\widehat{(A \otimes_R B)}_{I  \otimes B} \Bigr) \ar[d] \\
 \Tors_G(V^D \times_R B)  \ar[r]& \enskip 
 \Tors_G\Bigl( \widehat{(A \otimes_R B)}_{I  \otimes B} \otimes_A A_\sharp \Bigr) 
}\]
is  cartesian.

\smallskip

\noindent (2)  The $S$--functor $LG$ is isomorphic to
the functor associating to  each  $R$--algebra  $B$  the $G$--torsors over 
 $C_B$ together with trivializations on $V^D \times_R B$ and on 
 $\Spec(\widehat{(A \otimes_R B)}_{I  \otimes B})$.
\end{sproposition}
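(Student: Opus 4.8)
The plan is to bootstrap from the cartesian square for flat quasi-coherent sheaves (equivalently, flat affine schemes) recalled just above, due to Beauville-Laszlo \cite{BL} and Bhatt \cite{Bh}, up to the analogous statement for $G$-torsors; I treat (1) first, as (2) will be a formal consequence. The starting observation is that, for any base $X$ equipped with a structure map to $C$, a $G$-torsor over $X$ is the same datum as a flat affine $X$-scheme $E$ together with an action $\mu\colon G_X \times_X E \to E$ of $G_X = G \times_C X$ satisfying the torsor axioms, namely that $(\mathrm{pr}_2, \mu)\colon G_X \times_X E \to E \times_X E$ is an isomorphism and that $E \to X$ is faithfully flat. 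Each of the four schemes in the square carries a canonical map to $C$, the group $G$ pulls back compatibly along the restriction maps, and $G$ (hence every $G$-torsor) is flat and affine, so the flatness hypotheses demanded by Beauville-Laszlo are automatic.

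Given a $G$-torsor $P_1$ over $\Spec\bigl(\widehat{(A \otimes_R B)}_{I \otimes B}\bigr)$, a $G$-torsor $P_2$ over $V'_B$, and an isomorphism between their pullbacks to the overlap, the underlying flat affine schemes patch to a flat affine $C_B$-scheme $E$ by the square for $\cC$. Because pullback preserves fibre products over the base, the patching equivalence carries the group schemes and the products $G \times E$ through compatibly, so the actions $\mu_1$, $\mu_2$ glue to an action $\mu\colon G_{C_B} \times_{C_B} E \to E$. It remains to see that $E$ is a torsor: simple transitivity is an isomorphism of flat affine $C_B$-schemes, hence may be checked after the faithful, conservative patching functor, where it holds since $P_1, P_2$ are torsors; and $E$ is faithfully flat over $C_B$ because it is flat, its restrictions to $V'_B$ and to $\Spec\bigl(\widehat{(A \otimes_R B)}_{I \otimes B}\bigr)$ are faithfully flat, and, by fact (b) above together with $V'_B = C_B \setminus D_B$, these two schemes cover $C_B$ topologically. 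Thus $E$ is an fppf-torsor, and an \'etale one by smoothness of $G$; uniqueness up to unique isomorphism follows from the corresponding uniqueness in the square for $\cC$. This proves that the torsor square is cartesian.

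For (2), I apply (1) to the two-element cover $\{V'_B,\ \Spec(\widehat{(A \otimes_R B)}_{I \otimes B})\}$ of $C_B$. An element $g \in LG(B) = G\bigl(\widehat{(A \otimes_R B)}_{I \otimes B} \otimes_A A_\sharp\bigr)$ is exactly an automorphism of the trivial torsor over the overlap; gluing the trivial $G$-torsors on $V'_B$ and on $\Spec(\widehat{(A \otimes_R B)}_{I \otimes B})$ along $g$ produces, via (1), a $G$-torsor over $C_B$ with distinguished trivializations on the two pieces. Conversely, a $G$-torsor with two such trivializations yields, by comparing them over the overlap, a well-defined element of $LG(B)$. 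These constructions are mutually inverse and functorial in $B$, giving the asserted isomorphism of $S$-functors.

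The step I would scrutinize most is the passage of the action and the torsor axioms through the patching: one must confirm that the Beauville-Laszlo equivalence respects fibre products over the base, so that group objects and their actions descend, and that $\{V'_B,\ \Spec(\widehat{(A \otimes_R B)}_{I \otimes B})\}$ is a genuine fppf cover of $C_B$. Both points reduce to flatness of the completion map (fact (a)) and to the correspondence of maximal ideals (fact (b)); I expect no genuine obstruction here, only careful bookkeeping.
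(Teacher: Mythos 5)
Your proposal is correct and takes essentially the same route as the paper: part (1) is precisely the ``formal corollary'' of the Beauville--Laszlo square that the paper invokes in one line (you simply spell out the transport of the $G$-action and the torsor axioms through the equivalence), and your part (2) --- gluing trivial torsors along right translation by $g\in LG(B)$ and recovering $g$ as $f_1^{-1}f_2$ from a pair of trivializations --- is verbatim the paper's construction of the mutually inverse functors $\Phi$ and $\Psi$. One cosmetic correction: the two-piece cover $\bigl\{V'_B,\ \Spec\bigl(\widehat{(A \otimes_R B)}_{I \otimes B}\bigr)\bigr\}$ is fpqc rather than fppf, since the completion is not of finite presentation over $A$; this is harmless, because you use only flatness and topological surjectivity there, and the glued $E$ is fppf-locally (indeed \'etale-locally, by smoothness of $G$) trivial since it is trivialized by the fppf cover $E \to C_B$ itself.
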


\begin{proof}(1) Since $G$ is affine and flat over $C$,  it is a formal 
corollary of the patching statement.

\smallskip

\noindent (2) Let $\cC(B)$ be the the category of  $G$--torsors over 
 $C_B$ together with trivializations on $V^D \times_R B$ and on 
 $\Spec(\widehat{(A \otimes_R B)}_{I  \otimes B})$.
 An object of $\cC(B)$  is a triple $(E,f_1, f_2)$ where
 $E$ is a $G_{C_B}$--torsor, $f_1: G_{V^D \times_R B} \simlgr E_{V^D \times_R B}$ 
 and $f_2: G_{\widehat{(A \otimes_R B)}_{I  \otimes B}} \simlgr 
 E_{\widehat{(A \otimes_R B)}_{I  \otimes B}}$ are trivializations.
An element $g\in LG(B)=
 G\bigl( \widehat{(A \otimes_R B)}_{I  \otimes B} \otimes_A A_\sharp  \bigr)$
 gives rise to the right translation 
 $$
 (G_{V^D \times_R B})_{ \widehat{(A \otimes_R B)}_{I  \otimes B} \otimes_A A_\sharp}
 \simlgr 
 (G_{ \widehat{(A \otimes_R B)}_{I  \otimes B}})_{ \widehat{(A \otimes_R B)}_{I  \otimes B} \otimes_A A_\sharp}
 .
 $$
 It defines a $G_C$--torsor $E_g$ with trivializations $f_1$ and $f_2$
 on $V^D \times_R B$ and on 
 $\Spec(\widehat{(A \otimes_R B)}_{I  \otimes B})$.
 We get then a morphism  $\Phi: LG(B) \to \cC(B)$. 
 
Conversely let $c=(T,f_1,f_2)$  be an object of $\cC(B)$.
 Then the map $f_1^{-1}f_2: G_{ \widehat{(A \otimes_R B)}_{I  \otimes B} \otimes_A A_\sharp}
 \to G_{ \widehat{(A \otimes_R B)}_{I  \otimes B} \otimes_A A_\sharp}$
 is an isomorphism of $G$--torsors hence is the right translation by 
 an element $g=\Psi(c) \in LG(B)$.
 The functors $\Phi$ and $\Psi$ provide the desired  
 isomorphism of functors.
  % equivalence of categories.
\end{proof}

Continuing with the $R$-algebra  $B$, we have a factorization

\[\xymatrix@1{
 LG(B) \ar[rr]^p \ar[d] &&  \Bun_G(B) \ar[d]_{class \enskip map}\\
c_G(B):= G(V^D \times_R B) \,  \backslash \, LG(B) \, / \, 
L^+G(B) \qquad   \ar[rr]^{\qquad\qquad \underline{p}} && H^1(C_B,G). 
}\]
 The map $p$ is called the uniformization map. 
Proposition \ref{prop_patch}.(2) implies that  the bottom map induces 
  a bijection
$$
\leqno{(*)} \qquad \qquad
c_G(B) \simlgr \ker\Bigl( H^1(C_B,G) \to H^1(V^D \times_R B,G) \times
H^1\bigl( \widehat{(A\otimes_R B)}_{ I \otimes_R B},G \bigr) \Bigr). 
$$

\subsection{Link with the tangent space}\label{subsec_tg}
Our goal is to differentiate  the mapping $p: LG \to \Bun_G$.
Let $B$ be an  $R$--algebra and  consider the map  
$$
p[\epsilon]: LG(B[\epsilon])  \to \Bun_G(B[\epsilon]).
$$
We have $\widehat{(A \otimes_R B[\epsilon])}_{I  \otimes_R B[\epsilon]}
= \Bigl(\widehat{(A \otimes_R B)}_{I  \otimes B} \Bigr)[\epsilon]$
so that $LG(B[\epsilon])= 
{G\Bigl(\bigl(\widehat{(A \otimes_R B)}_{I  \otimes B} 
\otimes_A A_\sharp \bigr)[\epsilon]\Bigr)}$.
We consider the commutative diagram of categories\footnote{ 
 With the convention
that a set   defines a groupoid  
\cite[Tag 001A]{St}.
}

\[\xymatrix@1{
0 \ar[r] &   \Lie(G)\Bigl(\bigl(\widehat{(A \otimes_R B)}_{I  \otimes B} 
\otimes_A A_\sharp \bigr) \Bigr)
\ar[r]^{e}  & \ar[d]^{\wr} \ar[r]
G\Bigl(\bigl(\widehat{(A \otimes_R B)}_{I  \otimes B} 
\otimes_A A_\sharp \bigr)[\epsilon]\Bigr)
& \ar[d]^{\wr} G\Bigl(\bigl(\widehat{(A \otimes_R B)}_{I  \otimes B} 
\otimes_A A_\sharp \bigr) \Bigr) \\
 && LG(B[\epsilon]) \ar[r] \ar[d] & LG(B) \ar[d] \\
 && \Bun_G(B[\epsilon]) \ar[r] & \Bun_G(B)
}\]
where the first line is the exact sequence defining  
 the Lie algebra. By considering the fiber
 at the trivial $G$--torsor $b \in \Bun_G(B)$, we get then a functor 
$$
p\circ \e : \Lie(G)\Bigl( \widehat{(A \otimes_R B)}_{I  \otimes B} \otimes_A A_\sharp\Bigr) 
\to T_b\Bun_G(B). 
$$
Since $\Lie(G)\Bigl( \widehat{(A \otimes_R B)}_{I  \otimes B} 
\otimes_A A_\sharp\Bigr)=
\WW(\cLie(G))\Bigl( \widehat{(A \otimes_R B)}_{I  \otimes B} \otimes_A A_\sharp \Bigr) $, 
 we have an $R$--functor 
$$
dp: L\WW(\cLie(G))  \to T_b\Bun_G.
$$
We use now the equivalence of categories between  $T_b\Bun_G(B)$
and $\Tors_{\WW(\cLie(G))}(C_B)$ (cf.  \ref{relation}) and get 
the following compatibility with the classifying maps

\[\xymatrix@1{
L{\WW(\cLie(G))}(B) \ar[r] \ar[d]^{dp}& 
\WW(\cLie(G))(V^D \times_R B) \, \backslash \, 
L{\WW(\cLie(G))}(B) \, / \, 
L^+{\WW(\cLie(G))}(B)  \ar[d] &   \\ 
 T_b\Bun_G(B) \ar[r]^{class \, \,  map} & H^1(C_B, \WW(\cLie(G)).
}\]
We observe that the $\WW(\cLie(G))$--torsors over affine schemes
are trivial so that the top right  map is an isomorphism according to 
the fact $(*)$ above.
Also $H^1(C_B, \WW(\cLie(G)))$
identifies with the coherent cohomology of the 
$\cO_S$--module $\cLie(G)$ \cite[prop.\ III.3.7]{Mn}.

\subsection{Heinloth's section}

This  statement is a variation over a local henselian noetherian base
of a result due to Heinloth when the residue field is
 algebraically closed  \cite[cor. 8]{He1}.

\begin{sproposition} \label{prop_section} Assume that $S=\Spec(R)$ 
with $R$ local noetherian henselian  with
residue field $\kappa$.
We assume that $G$ is semisimple and that its  fundamental group 
is smooth over $C$.
We assume that $G_{D_\kappa}$ admits a 
strictly proper parabolic $D_\kappa$--subgroup $Q$
such that $n_{\kappa(s)}(G_{\kappa(s)}, Q_{\kappa(s)})=1$
for each point $s \in D_\kappa$ with finite residue field.

\smallskip

\noindent (1) There exists a map $F: \mathbf{A}^n_R \to LG$
such that  the composite
$$
f: \mathbf{A}^n_R \xrightarrow{F} LG \xrightarrow{p} \Bun_G
$$ 
is a map of stacks,   maps  $0_R$ to  the 
trivial $G$--torsor $b$  and such that 
$$
df_{0_R} : R^n \to T_b\Bun_G(R) 
$$
is essentially surjective. Furthermore there exists a neighborhood
$\cN$ of $0_\kappa$ in $\mathbf{A}^n_R$
such that $f_{\mid \cN}$ is smooth.

\smallskip

\noindent (2) Let $E$ be a $G$--bundle over $C$ such that 
$E \times_{C} C_\kappa$ is trivial. Then $E$ is trivial on $V^D$.

\end{sproposition}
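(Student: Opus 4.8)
The plan is to prove (1) constructively, by exhibiting $F$ through the loop group and the isotropy of $G$, and then to deduce (2) from (1) by a smoothness-plus-henselianity argument.

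\emph{Construction for (1).} First I would transport the isotropy to the formal disc. Since the scheme of parabolic subgroups of $G$ is smooth over $C$, and $\widehat A = \widehat A_I$ is a complete (hence henselian) semilocal ring whose closed points are the points of $D_\kappa$, the strictly proper parabolic of $G_{D_\kappa}$ (resp.\ the splitting of $G_{D_\kappa}$) lifts to a strictly proper parabolic $\widehat P$ of $G_{\widehat A}$; let $U'$ be the last Demazure subgroup of its unipotent radical, a vector group. The residue fields of $\widehat A$ are finite extensions of $\kappa$, hence infinite (resp.\ finite of characteristic $\neq 2$), so Lemma~\ref{lem_prod} applies over $\widehat A$ and yields $g_1,\dots,g_m \in G^{+,\widehat P}(\widehat A) \subset G(\widehat A)$ such that $dh\colon \Lie(U')(\widehat A)^m \to \Lie(G)(\widehat A)$ is onto. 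Identifying the vector group $U'$ with its Lie algebra, I would choose a finite free $R$--submodule $W$ of $\Lie(U')\bigl(\widehat A \otimes_A A_\sharp\bigr)$ and, viewing the $g_i$ as constant loops, define
$$
F \colon \mathbf{A}^n_R \cong \WW(W)^m \to LG,\qquad (w_1,\dots,w_m) \mapsto {}^{g_1}\!w_1 \cdots {}^{g_m}\!w_m,
$$
and set $f = p \circ F$, so that $f(0_R) = b$.

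\emph{Differential and smoothness for (1).} Using the identification $T_{\Bun_G,b}(R) \cong H^1(C,\cLie(G))$ of §\ref{subsec_tg}, the differential of $f$ at $0_R$ factors as
$$
R^n \to \Lie(U')\bigl(\widehat A \otimes_A A_\sharp\bigr)^m \xrightarrow{(w_i)\,\mapsto\,\sum_i \Ad(g_i)w_i} \Lie(G)\bigl(\widehat A \otimes_A A_\sharp\bigr) \xrightarrow{\;dp\;} H^1(C,\cLie(G)),
$$
the last arrow being the uniformization differential $dp\colon L\WW(\cLie(G))(R) \to H^1(C,\cLie(G))$, which is onto by the bijection $(*)$ (the $\WW(\cLie(G))$--torsors over affines being trivial, §\ref{subsec_tg}). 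Because $dh$ is onto over $\widehat A$, it stays onto after the base change $\widehat A \to \widehat A \otimes_A A_\sharp$, so the middle arrow is surjective and the composite $\Lie(U')(\widehat A \otimes_A A_\sharp)^m \to H^1(C,\cLie(G))$ is surjective over $R$; since $H^1(C,\cLie(G))$ is a finitely generated $R$--module (coherent cohomology of a projective curve), a large enough $W$ makes $df_{0_R}$ essentially surjective (i.e.\ surjective on isomorphism classes $H^1(C,\cLie(G))$). The identical computation over $\kappa$ shows $df$ is surjective on the fibre at $0_\kappa$; as $\mathbf{A}^n_R$ and $\Bun_G$ are smooth over $S$ (Proposition~\ref{prop_stack}), the fibral smoothness criterion of the appendix gives smoothness of $f$ at $0_\kappa$, and smoothness on a neighbourhood $\cN$ of $0_\kappa$ follows by openness of the smooth locus.

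\emph{Deduction of (2).} Let $[E] \in \Bun_G(R)$ be the class of $E$ and form the $2$--fibre product $Z = \mathbf{A}^n_R \times_{f,\,\Bun_G,\,[E]} \Spec(R)$, whose $T$--points are pairs $(x,\phi)$ with $x \in \mathbf{A}^n_R(T)$ and $\phi\colon f(x) \simlgr E_T$; since $\mathbf{A}^n_R$ and $\Spec(R)$ carry no nontrivial automorphisms, $Z$ is an algebraic space, and over $\cN$ it is smooth over $R$ by base change of $f|_\cN$. The triviality of $E \times_C C_\kappa$ provides an isomorphism $f(0_\kappa) = b_\kappa \simlgr E_\kappa$, i.e.\ a $\kappa$--point of $Z$ lying over $0_\kappa \in \cN$; as $R$ is henselian local and $Z$ is smooth over $R$ near this point, it lifts to a point $x \in \mathbf{A}^n_R(R)$ with $f(x) \simlgr E$. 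Then $[E] = p(F(x))$ lies in the image of the uniformization map $p$, so by the bijection $(*)$ following Proposition~\ref{prop_patch} the torsor $E$ is trivial on $V'$ (and on $\Spec \widehat A$), as desired. The main obstacle is step (1): lifting the parabolic to $\widehat A$, invoking Lemma~\ref{lem_prod} there, and verifying that the differential of the product of conjugated unipotent loops surjects onto $H^1(C,\cLie(G))$.
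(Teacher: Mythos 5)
Your proposal is correct and follows essentially the same route as the paper's proof: lifting the parabolic (resp.\ Borel) to the semilocal ring $\widehat A$ via smoothness of the scheme of parabolics and Hensel, applying Lemma \ref{lem_prod} over $\widehat A$, building $F$ as a product of conjugated exponentials of elements of $\Lie(U')\bigl(\widehat A \otimes_A A_\sharp\bigr)$ whose differential, tracked through the identification of \S \ref{subsec_tg} and the surjectivity of $dp$ coming from $(*)$, hits a generating set of the finitely generated $R$--module $H^1(C,\cLie(G))$, then concluding smoothness near $0_\kappa$ by the Jacobian criterion of the appendix, and deducing (2) from the fibre product with $\Spec(R)$ plus the Hensel lemma for smooth algebraic spaces (Proposition \ref{prop_hensel}). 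The only cosmetic differences are that the paper explicitly lifts generators $Y_i$ of $H^1(C,\cLie(G))$ and expresses them as $Y_i=\sum_j c_{i,j}\,{}^{g_j}\!Z_{i,j}$ rather than choosing a ``large enough'' parameter module, and it justifies that your $Z$ is a (quasi-separated) algebraic space by citing the representable diagonal of $\Bun_G$ (or, per Gabber's remark, by identifying it with the affine scheme $\Isom^\flat(Q,E_{\cN})$ of Lemma \ref{lem_weil}) rather than by the absence of automorphisms of $\mathbf{A}^n_R$ and $\Spec(R)$ --- points your sketch leaves loose but which cause no gap.
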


\begin{proof} 
 (1) 
 The proof goes by a differential argument.
The $R$--module $H^1(C, \Lie(G))$ is finitely generated over
$R$ \cite[III.5.2]{Ha} and we lift a generating family of  $H^1(C, \Lie(G))$ to 
a family of elements $Y_1, \dots, Y_r$ of 
$\Lie(G)\Bigl(\widehat{A}\otimes_{A} A_{\sharp}  \Bigr)$.
We have noticed  that $\widehat A$ is a semilocal noetherian ring
(Ex. \ref{ex_simple}.(b)). We want now
  to apply Lemma \ref{lem_prod} to $G_{\widehat A}$
with respect to  the closed points of $\Spec(\widehat A)$. 
Let $Z$ be the $C$--scheme of strictly parabolic subgroups 
of $G$ \cite[XXVI.3]{SGA3}. It is a smooth proper $C$--scheme. It follows
that its Weil restriction $\prod_{D/R}(Z_D)$
its smooth so that 
Hensel's lemma shows that  $Q$ 
lifts to a strictly proper parabolic $D$--subgroup scheme $P_D$
 of $G_D$. Similarly $Z( \widehat A) \to Z(A/I)=Z(D)$ is onto
 %Once again Hensel's lemma  shows 
 so that $P_D$ lifts a strictly proper parabolic $\widehat A$--subgroup
scheme $P$ of $G_{\widehat A}$.
We put $U= \rad_u(P)$, it is a smooth affine $\widehat A$--group scheme
and we denote by $U_{last}$ its $R$--subgroup scheme constructed in \S \ref{subsec_attached}.

Lemma \ref{lem_prod} provides elements $g_1, \dots, g_m \in E_P(\widehat A)$
such that the  product map
$$
h: (U_{last})^m \to G, \enskip (u_1,\dots, u_m) \, \mapsto \, {^{g_1}\!u_1} \dots  
{^{g_m}\!u_m}
$$ 
induces a surjective differential
$$
dh: \Lie(U_{last})^m(\widehat A)  \, \to \,  \Lie(G)(\widehat A) , \quad (X_1,\dots,X_m) \, \mapsto  \,
{^{g_1}\!X_1} + \dots  + {^{g_m}\!X_m}.
$$
In other words we have 
$$
\Lie(G)(\widehat A) = \enskip  ^{g_1}\!\Lie(U_{last})(\widehat A)
\enskip  + \enskip  ^{g_2}\!\Lie(U_{last})(\widehat A)\enskip  +
\enskip \dots \enskip  +  \enskip  ^{g_m}\!\Lie(U_{last})(\widehat A)
$$
so that (using the identity of Lemma \ref{lem_lie2}.(2))
$$
\Lie(G)(\widehat A  \otimes_A A_\sharp)=
\Lie(G)(\widehat A)  \otimes_A A_\sharp 
\qquad  
$$
$$
= \enskip
^{g_1} \! \Lie(U_{last})(\widehat A) \otimes_A A_\sharp \enskip
+ \enskip ^{g_2} \! \Lie(U_{last})(\widehat A) \otimes_A A_\sharp
\enskip + \enskip \dots  \enskip + \enskip ^{g_m} \! \Lie(U_{last})(\widehat A) \otimes_A A_\sharp.
$$
We can write $$
\leqno{(**)} \qquad \qquad  
Y_i = \sum\limits_{j=1, \dots , m} c_{i,j} \enskip ^{g_j}\!Z_{i,j}
$$
where $Z_{i,j} \in \Lie(U_{last})(\widehat A)$ and
$c_{i,j} \in \widehat A  \otimes_A A_\sharp$ for each $j$.

Since $U_{last}$ is an $\widehat A$--vector group scheme, there is a canonical 
identification $\exp: \WW( \cLie(U_{last})) \simlgr  U_{last}, \enskip X \mapsto \exp(X)$. 
We consider the polynomial ring
$B= R[ t_{i,j}]$ where $i=1,\dots,r, j=1,\dots,m$.
We consider the map of $R$--functors 
$F:\mathbf{A}^{rm}_R \to LG$ defined 
by the element 
$$
\prod\limits_{i=1,\dots,r ; j=1,\dots,m}
{^{g_i}\!\exp\Bigl(t_{i,j} \enskip  c_{i,j} \enskip Z_{i,j} \Bigr)}
 \in 
G\Bigl(\widehat{(A \otimes_R B)}_{I \otimes_R B}
\otimes_{A} A_\sharp\Bigr)=LG(B)
$$
where we can take for example the lexicographic order.
It induces an $R$--map \break  $f: \mathbf{A}^{rm}_R \to \Bun_G$ of stacks
mapping $0_R$ to  the trivial $G$--bundle. 
Taking into account the last compatibility of \S \ref{subsec_tg} ,
its differential at $0_R$ 
$$
df: R^{rm} \to  T_b\Bun_G(R) 
$$
factorizes  through $L{\WW(\cLie(G))}(B)$. More precisely 
we have a commutative diagram

\[\xymatrix@1{
R^{rm} \ar[r]^{h \quad} \ar[d]^{df_0}  \qquad & \quad 
L{\WW(\cLie(G))}(R) \ar[r] \ar[ld]_{dp} & H^1(C, \Lie(G))  \\
 T_b\Bun_G(R)  \ar[rru]_{class \enskip map} &&
}\]
where  $h$ maps 
the basis  element $e_{i,j} \in R^{rm}$ to  $c_{i,j} \, ^{g_j} \! Z_{i,j}$ in 
$L{\WW(\cLie(G))}(R)$. We take into account the identity $(**)$.
By $R$--linearity, the image of $R^{mr} \to H^1(C, \Lie(G))$
contains all $Y_i$'s.
Since the $Y_i$'s generate the $R$-module 
$H^1(C, \Lie(G))$, we conclude that $df_0$ is essentially surjective.

The formation of $R^1p_*\cLie(G)$
commutes with base change, we have an 
isomorphism  $H^1(C, \cLie(G)) \otimes_R \kappa
\simlgr H^1(C_0, \cLie(G))$
so that 
$df_{0,k}: k^{mr} \to H^1(C_0, \Lie(G))$ is onto as well.

It follows that  
 $f$ is smooth locally at $0_\kappa$ according to the Jacobian smoothness criterion
 \ref{prop_jacobian} stated in the appendix.
Thus there is $\cN$ as claimed in the statement.

\smallskip

\noindent (2) We see  $E$ as an object of  $\Bun_G(R)$ and  
consider 
the fiber product
 \[\xymatrix@1{
\cN \ar[r]^{f}& \Bun_G  \\
 Y \ar[r]^{\pi \quad } \ar[u]& \Spec(R) \ar[u]^{E} .
}\]
Then $Y$ is an $R$--algebraic space \cite[8.16]{O}
which is smooth over $R$.
Let $Q$ be the $G$-torsor over $C_\cN$ defined by $f: \cN \to \Bun_G$. 
 The algebraic space $Y$ is representable
 by the $\cN$--scheme  $\Isom^\flat(Q,E_\cN )$ defined in Lemma \ref{lem_weil}.
   Hensel's Lemma  
 shows  that $Y(R) \not=\emptyset$. It follows 
 that there exists $u \in \cN(R)$ which maps to $E$.
 Since the map $\cN \to \Bun_G$ factorizes  through $LG$, 
 we conclude that the $G$-torsor $E$ is trivial on $V^D$. 
 \end{proof}

\section{Proof of the main result}\label{section_result}
 
\medskip

We need the following consequence of  Poonen's result \cite{P} 
and its refinement  by  Moret-Bailly  \cite{MB2}.

\begin{sproposition}\label{prop_poonen} 
Let $F$ be a field.
Let $Y$ be an irreducible  $F$-scheme of finite type of positive dimension. 
We denote by $Y_0^{sep}$ the set of separable closed points of $Y$.
Let $X$ be an $F$--scheme of finite type
and let  $f: X \to Y$ be a smooth surjective $F$-morphism.
Then
\[
\Sigma_f(Y) = \Bigl\{ y \in Y_0 \, \mid \, X_y(F(y)) \not= \emptyset 
 \Bigr\}
\]
is Zariski dense in $Y$.
If furthermore $Y$ is $F$-smooth, then  the  set 
\[
\Sigma^{sep}_f(Y) = \Bigl\{ y \in Y_0^{sep} \, \mid \, X_y(F(y)) \not= \emptyset 
 \Bigr\}
\]
is Zariski dense in $Y$.
\end{sproposition}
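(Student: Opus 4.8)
The plan is to argue by induction on the relative dimension $d$ of $f$, reducing the statement to the quasi-finite (\'etale) case and then treating that base case by a Chebotarev-type density argument; the passage from relative dimension $d$ to $d-1$ is where Poonen's Bertini theorem over finite fields \cite{P}, together with Moret-Bailly's strengthening \cite{MB2} for a general base field, is indispensable.

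First I would carry out the standard reductions. Since Zariski density in the irreducible scheme $Y$ may be tested after replacing $Y$ by any dense open and $X$ by its preimage, I would choose an irreducible component $X_0$ of $X$ dominating $Y$ (one exists because $f$ is surjective and $Y$ is irreducible). Its generic fibre is a union of connected components of the smooth $F(Y)$-scheme $X_\eta$, hence is itself smooth, so by spreading out $X_0\to Y$ is smooth and surjective over some dense open of $Y$; shrinking $Y$ to that open and replacing $X$ by $X_0$, I may assume $X$ irreducible and $f$ smooth surjective of constant relative dimension $d=\dim X-\dim Y\ge 0$, with $Y$ affine and $X$ quasi-projective over $F$.

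Next comes the induction on $d$. If $d=0$ then $f$ is \'etale; after a further shrinking of $Y$ I may assume that a connected component $W\subseteq X$ is finite \'etale over $Y$, and it then suffices to show that the closed points of $Y$ splitting completely in $W$ are Zariski dense, since at such a point $y$ the fibre acquires an $F(y)$-rational point and $y$ is separable. Density of the completely split locus is the Chebotarev density theorem: restricting $W\to Y$ to suitable curves through an arbitrary region of $Y$ and applying the function-field (or global) Chebotarev theorem produces infinitely many, hence Zariski dense, split closed points; this density, together with the separability of the residue fields, is exactly the content of \cite{MB2}. If $d>0$, I would cut $X$ by a hypersurface section $H$ so that $X\cap H\to Y$ is again smooth and surjective onto a dense open of $Y$, now of relative dimension $d-1$, and conclude by the inductive hypothesis applied to $X\cap H\to Y$ together with the evident inclusion $\Sigma_{f|_{X\cap H}}(Y)\subseteq\Sigma_f(Y)$.

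The main obstacle is precisely the choice of the hypersurface section when $F$ is finite: a generic hyperplane need not exist with the required properties, so the classical Bertini theorem is unavailable and one must instead invoke Poonen's Bertini theorem \cite{P} to produce a hypersurface of large degree whose intersection with $X$ is smooth of the correct dimension and still dominates $Y$. Ensuring simultaneously that the section dominates $Y$ (so that surjectivity is preserved after shrinking) and that it is smooth over a dense open of $Y$, uniformly over the possibly imperfect field $F$, is the technical heart supplied by \cite{P} and extended in \cite{MB2}; over an infinite field this step reduces to the more classical relative Bertini theorem and is comparatively routine.
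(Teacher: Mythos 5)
The paper's own proof is a two-line reduction: every nonempty open $V \subseteq Y$ is again irreducible, of finite type and of positive dimension, and $f^{-1}(V) \to V$ is still smooth and surjective, so density of $\Sigma_f(Y)$ follows from non-emptiness of $\Sigma_f(V)$ for every such $V$; non-emptiness is then precisely Theorem 1.1 of \cite{MB2}, taken as a black box. Your proposal instead attempts to reprove the theorem, and the proof does not close. The decisive gap is in your \'etale base case: you assert that at a completely split closed point $y$ ``the fibre acquires an $F(y)$-rational point and $y$ is separable.'' The first half is fine; the second is unjustified. Complete splitting of $W \to Y$ over $y$ controls the residue extensions $F(x)/F(y)$ for $x$ in the fibre and says nothing about $F(y)/F$. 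Over an imperfect field (take $F = \mathbb{F}_p(t)$, $Y = \mathbb{A}^1_F$, $W = Y$: every closed point is ``completely split,'' yet the point cut out by $x^p - t$ has inseparable residue field) the conclusion $y \in Y_0^{sep}$ fails for split points in general; arranging separability of $y$ over $F$ is exactly the content of Moret-Bailly's strengthening of Poonen and cannot be extracted from the splitting condition.

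Second, Chebotarev density has no meaning over a general base field: the proposition holds for arbitrary $F$ (including $\mathbb{Q}_p$, $\mathbb{R}$, and imperfect fields), where closed points of a curve are not primes of a global field. You implicitly concede this by declaring the needed density ``exactly the content of \cite{MB2}''---but then the base case of your induction \emph{is} the theorem being proved, and the induction is vacuous as a proof. Finally, the inductive step misplaces the difficulty: the reference \cite{P} is Poonen's residue-field paper (J.\ Algebra 2001), not his Bertini theorem over finite fields (Annals 2004); and in any case the hypersurface cut should be performed on the generic fibre $X_\eta$ over the function field $F(Y)$, which is always infinite since $\dim Y \geq 1$, so classical Bertini plus spreading out handles that step for every $F$ and no finite-field Bertini input is required there. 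The genuine difficulty is concentrated in the quasi-finite case over an arbitrary field, which is exactly what \cite{P} and \cite{MB2} supply and what your sketch ends up assuming.
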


\begin{proof}
 Since shrinking is allowed, this reduces to show that the set 
 $\Sigma_f(Y)$ is non-empty which  is  \cite[remark p. 225]{P}. 
 The second fact  uses the refinement of \cite[page 472]{MB2}. 
\end{proof}

\begin{slemma}\label{lem_qs}
Let $F$ be field. Let  
$X$ be an irreducible  $F$-scheme of finite type and 
of dimension $\geq 1$.  Let $X_0^{sep}$ be the set of separable
closed points of $X$.
Let $H$ be a semisimple
$X$--group scheme. Then the set 
\[
X(s)= \Bigl\{ x \in X_0 \, \mid \, 
\hbox{$H_{F(x)}$ is split } \Bigr\} 
\]
is Zariski dense in $X$. 
If furthermore, $X$ is smooth, then the set 
\[
X^{sep}(s)= \Bigl\{ x \in X_0^{sep} \, \mid \, 
\hbox{$H_{F(x)}$ is split } \Bigr\} 
\]
is Zariski dense in $X$. 
\end{slemma}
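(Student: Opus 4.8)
The plan is to manufacture a smooth surjective $X$-scheme whose $F(x)$-points over a closed point $x$ detect precisely the splitness of $H_{F(x)}$, and then to feed it into Proposition \ref{prop_poonen}. Since $X$ is irreducible it is connected, so the semisimple $X$-group scheme $H$ has constant type and is an isotrivial form of a split Chevalley group \cite[XXII, XXIV]{SGA3}; let $H_0$ be the split semisimple $F$-group scheme of that type and root datum, and set $H_{0,X}=H_0\times_F X$. First I would form the $X$-scheme
$$
Z=\uIsom_{X\text{-gp}}(H_{0,X},H),
$$
the functor of isomorphisms of $X$-group schemes, which is representable and affine of finite type over $X$ by \cite[XXIV]{SGA3}. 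Since $H$ and $H_{0,X}$ become isomorphic after an fppf (indeed \'etale) base change, $Z$ is a torsor under the $X$-group scheme $\uAut(H_0)_X$.

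The structural input I rely on is that $\uAut(H_0)$ is smooth over $F$: by \cite[XXIV]{SGA3} it is an extension of the \'etale group $\Out(H_0)$ by the smooth adjoint group $H_0^{\mathrm{ad}}$, hence smooth. Consequently $f\colon Z\to X$, being a torsor under the smooth group scheme $\uAut(H_0)_X$, is smooth; and as a torsor it is faithfully flat, hence surjective. Moreover $Z$ is of finite type over $F$, being of finite type over $X$ and $X$ of finite type over $F$. Thus $f\colon Z\to X$ is a smooth surjective morphism of finite-type $F$-schemes with $X$ irreducible of dimension $\geq 1$, which is exactly the setting of Proposition \ref{prop_poonen}.

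It then remains to identify the two sets. For a separable closed point $x\in X_0^{sep}$ the fiber is $Z_x=\uIsom_{F(x)\text{-gp}}(H_{0,F(x)},H_{F(x)})$, and $Z_x(F(x))\neq\emptyset$ if and only if $H_{F(x)}\cong H_{0,F(x)}$, i.e.\ if and only if $H_{F(x)}$ is split. Hence $X(s)=\Sigma_f(X)$, and Proposition \ref{prop_poonen} yields that $X(s)$ is Zariski dense in $X$. I expect the only real obstacle to be the second step: establishing representability of the Isom functor together with the smoothness of the automorphism group scheme of a Chevalley group, which is what guarantees that $f$ is smooth and surjective. Once that is secured, the density is a direct application of the Poonen--Moret-Bailly result.
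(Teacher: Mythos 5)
Your proof is correct and takes essentially the same route as the paper: the paper likewise forms the Isom scheme $Y=\Isom(G_0,G)$ with $G_0$ the Chevalley form, invokes \cite[XXIV.1.9]{SGA3} for its affine smoothness over $X$, and applies Proposition \ref{prop_poonen} to $Y\to X$. The additional details you spell out (constancy of the type over the irreducible base, smoothness and surjectivity via the torsor structure under the smooth group $\uAut(H_0)_X$, and the identification of rational points of the fibers with splitness) are precisely what the paper compresses into its citation of SGA3.
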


 \begin{proof} 
 We  consider the $X$--scheme $Y=\Isom(H_0,H)$ 
 (where $H_0$ is  the Chevalley form of $H$) which is affine smooth over $X$
 \cite[XXIV.1.9]{SGA3}.
 Proposition \ref{prop_poonen} applied to $Y \to X$
 yields that $X(s)$ is dense in $X$.
 \end{proof}

We can proceed to the proof of Theorem \ref{thm_main}.
  
\begin{proof} 
We are given two $G$-torsors over $C$ 
such that 
$E \times_{C} C_\kappa$ is isomorphic
to $E' \times_C C_\kappa$. Up to consider the twisted $R$--group scheme ${^{E'}G}$, 
we can assume that $E'=G$ without loss of generality.
Let $\Theta$ be the set of irreducible
components of  $C_\kappa$  and denote by $C_\kappa^\theta$ the
component attached to $\theta \in \Theta$.

\smallskip

\noindent{\it Case (I).} 
Since each $C^{sm} \cap  C^{\theta}_\kappa$ is smooth
and nonempty, 
Lemma \ref{lem_qs} provides two fully distinct families of closed   
separable points $(c^\theta_1)_{\theta \in \Theta}$
and $(c^\theta_2)_{\theta \in \Theta}$ of 
$C^{sm} \cap  C_\kappa^\theta$ 
such that $G_{c^\theta_i}$ is a split semisimple
$\kappa(c^\theta_i)$--group for $i=1,2$ and each $\theta \in \Theta$. 
Let $Q^\theta_i$ be a $\kappa(c^\theta_i)$-Borel subgroup of $G_{c^\theta_i}$
for each  $\theta$ and  $i=1,2$.
Taking into account Lemma \ref{lem_root2}.(1), we have
$n_{\kappa(s)}(G_{\kappa(c^\theta_i)}, Q_{\kappa(c^\theta_i)})=1$
if $\kappa$ is finite (that is, there are enough long root elements).
%For each $\theta_i$, there exists a finite local $R$-extension $R^\theta_i$ 
%whose residue field is $\kappa(c^\theta_i)$ \cite[IX.37.2]{BAC}.
Since $R$ is henselian  there exists finite \'etale extensions $R^\theta_i$ of 
$R$ which lifts $\kappa(c^\theta_i)/\kappa$ and
$R^\theta_i$ is henselian as well 
\cite[Tag 04GH]{St}.
Since $C^{sm}$ is smooth over $R$,
Hensel's lemma applies to $C^{sm} \times_R R^\theta_i$
shows that each $c^\theta_i$ 
lifts in a closed $R$--subscheme
$D^\theta_i \to C$ which is finite \'etale over $R$.
We put $D_i = \bigsqcup\limits_{\theta \in \Theta} D^\theta_i$ for $i=1,2$.

 Since $C$ is projective over $R$ and $D_i$ is semilocal,
 $D_i$ is a closed $R$--subscheme of 
 an affine open $R$--subscheme of $C$ and $D_i$ is finite \'etale over $R$.
 For each point $s \in \Spec(R)$,  $D_{i,s}$ consists of smooth points of $C_s$
 so is an effective Cartier divisor, hence $D_i$ is a relative Cartier divisor for 
 $C/R$ \cite[end of Tag 062Y]{St}.
 By construction,  $D_{i,\kappa}$  has positive degree on each irreducible 
component of $C_\kappa$ so is ample \cite[\S 7.5, prop. 5.5]{Liu} so that 
$D_i$ is ample \cite[4.7.1]{EGA3}  and hence $C \setminus D_i$ is affine for $i = 1, 2$.
 
 We claim that the scheme  $C \setminus D_i$ is affine for $i=1,2$.
 %Since   $D_{i, \kappa}$ is an ample divisor
%of the curve $C_\kappa$,  $D_i$ is an ample divisor
% on $C$ \cite[4.7.1]{EGA3} 
 The group   $G\times D_{i, \kappa}$  admits a 
Borel subgroup (resp.\ is split) for $i=1,2$.
Now let $E$ be a $G$-torsor over $C$ such that
$E \times_C C_\kappa$ is trivial. 
Proposition \ref{prop_section}.(2) shows that $E_{\mid C \setminus D_i}$
is trivial for $i=1,2$. Since $C={(C \setminus D_1) \cup (C \setminus D_2)}$, 
we conclude that the $G$--torsor $E$ is locally trivial for the Zariski topology. 

\smallskip

\noindent {\it Case (II).} In this case $R$ is a henselian DVR.
Lemma \ref{lem_qs} provides
two fully distinct families of closed 
points $(c^\theta_1)_{\theta \in \Theta}$
and $(c^\theta_2)_{\theta \in \Theta}$ of 
$C_\kappa$ 
such that $G_{c^\theta_i}$ is a split semisimple
$\kappa(c^\theta_i)$--group for $i=1,2$ and for each $\theta \in \Theta$. 

We use now that there is closed $R$--embedding $i: C \to \PP^N_R$ \cite[Tag 0E6F]{St}.
We note that $C \to \Spec(R)$ is a regular fibered surface in the sense of Liu's book \cite{Liu}. 
According to \cite[\S 8.3, lemma 3.35]{Liu},
there exists an effective Cartier (equivalently Weil) ``horizontal''
divisor $D^\theta_i$ of $C$  such that
$C_\kappa \cap \mathrm{Supp}(D_i^\theta)= c_i^\theta $ for 
each $\theta \in \Theta$ and $i=1,2$  (note it is finite flat over $S$). 
We consider the  effective Cartier  divisors 
$D_i = \bigsqcup\limits_{\theta \in \Theta} D^\theta_i$ for $i=1,2$;
$D_i$ is finite flat over $S$.
According to \cite[Tag 056Q]{St}, $D_i$ is a relative Cartier divisor
on $C$ and $D_{i,\kappa}$ is an effective Cartier divisor of $C_\kappa$.
Since $G_{\kappa(c_i)}$ is split,
we have that $G\times D_{i, \kappa}$  is split for $i=1,2$ 
by using the smoothness
of the scheme $\Isom(G_0,G)$ (where $G_0$ is the Chevalley form of $G$).
Repeating verbatim the argument of Case (I) finishes the proof.
\end{proof}

\begin{sremark} {\rm 
In the proof of (1), 
an important step is the construction of 
the divisor $D$ such that $D$ is finite \'etale over $R$ 
and $G_D$ is split. Though our contruction is quite different, 
a similar argument has been used by Panin and Fedorov in 
their proof of Grothendieck-Serre's conjecture \cite[prop. 4.1]{FP}.
}
\end{sremark}

\section{Extension to reductive groups}\label{section_gather}
 
We gather here our results in a single long statement.

\begin{stheorem}\label{thm_red} Assume that
$R$ is local henselian noetherian of  residue field $\kappa$.
Let $p$ be the characteristic exponent of $\kappa$.
Let $f: C \to \Spec(R)$ be a  relative curve of relative dimension $1$
 and denote by $C^{sm}$ the smooth locus of $f$.
We assume that 
one of the following holds:

\smallskip

\noindent (I) $C^{sm}_\kappa$ is  dense in $C_\kappa$;

\smallskip

\noindent (II) $R$ is a DVR  and 
$C$ is integral regular.

\smallskip

\noindent Let $G$ be a reductive $C$--group scheme and consider its presentation
\cite[XXII.6.2.3]{SGA3}
$$
1 \to \mu \to G^{sc} \times_C \rad(G) \to G \to 1,
$$
where $\rad(G)$ is the radical $C$-torus of $G$ and
$G^{sc}$ is the simply connected universal cover of $DG$.
We assume that 

\smallskip

(i) $\mu$ is \'etale over $C$;

\smallskip

(ii)  the $C$--torus $\rad(G)$ is 
is quasi-split by a a finite \'etale extension $\widetilde{C}/C$ of degree prime to $p$.

Let $E$, $E'$  be  two $G$-torsors over $C$ such that 
$E \times_{C} C_\kappa$ is isomorphic
to $E' \times_C C_\kappa$. Then $E$ and $E'$ are  locally isomorphic
for the Zariski topology.
\end{stheorem}

 \begin{proof}
 Once again we can assume that $E'=G$.
We consider the following commutative diagram
\[\xymatrix@1{
H^1(C,\mu) \ar[r] \ar[d] &  H^1(C,G^{sc}) \times H^1(C,T) \ar[r] \ar[d] & 
H^1(C,G)  \ar[r] \ar[d] & H^2 (C,\mu)  \ar[d] \\
H^1(C_\kappa,\mu) \ar[r]  &  H^1(C_\kappa,G^{sc})
\times H^1(C_\kappa,T) \ar[r]  & 
H^1(C_\kappa,G)  \ar[r]  & H^2 (C_\kappa,\mu) ,
}\]
  where the horizontal lines are exact sequences of pointed sets.
  On the other hand, the proper base change theorem for 
  \'etale cohomology shows that the maps $H^i(C,\mu) \to H^i(C_\kappa,\mu)$
  are bijective for $i=1,2$ \cite[XII.5.5.(iii)]{SGA4}.
  By diagram chase, it follows that the map
  $$
  \ker\bigl( H^1(C,G^{sc}) \to H^1(C_\kappa,G^{sc}) \bigr)
  \times  \ker\bigl( H^1(C,T) \to H^1(C_\kappa,T_\kappa) \bigr) \qquad
  $$
  $$
  \qquad \to   \ker\bigl( H^1(C,G) \to H^1(C_\kappa,G) \bigr)
  $$
  is onto. The first kernel (resp.\ the second one) 
    consists of Zariski locally trivial
  according to Theorem \ref{thm_main} (resp.\ 
  Proposition  \ref{prop_MB}.(3)
  and Theorem \ref{thm_torus}).
  Thus  the third  kernel  consists of  Zariski locally trivial torsors.
  \end{proof}

%\begin{sremark}{\rm 
%The main argument used there is the stratification of
%Conrad-Lieblich-Olsson \cite{CLO} which refines that of Gruson-Raynaud.
%}
%\end{sremark}

%Replaced by a simpler argument provided by Moret-Bailly.

We have the following refinement of Theorem \ref{thm_red} which answers  
a question of Olivier Benoist.

\begin{stheorem}\label{thm_ref} Assume that $R$ 
is  local henselian noetherian of  residue field $\kappa$.
 Let $f: C \to \Spec(R)$ be a  smooth  
 relative curve of relative dimension $1$
 Let $G$ be a reductive $C$--group scheme which satisfies the same assumption as in Theorem \ref{thm_red}.
 
Let $E,E'$ be  $G$-torsors over $C$. Then the following are equivalent:

\smallskip

(i) The $G$--torsors $E,E'$ are locally isomorphic  for the Zariski topology; 
\smallskip

(ii) The $G_{C_\kappa}$--torsors $E_{C_\kappa}$,  $E'_{C_\kappa}$ are locally isomorphic for the Zariski topology.

\end{stheorem}

The proof of Theorem \ref{thm_ref} involves the Iwasawa decomposition; since  
 the reference  \cite[cor. 7.3.2.(ii)]{BrT} is for
 the semisimple case, we provide a short proof for the reductive case.

\begin{slemma} \label{lem_iwasawa} Let $O$ be a henselian DVR of fraction field 
$K$. Let $H$ be an  $O$--reductive group scheme and 
$P$ a minimal $O$--parabolic subgroup of $H$. Let $P=U \rtimes L$
be a Levi decomposition (it exists according to \cite[XXVI.2.3]{SGA3})
and let $S$ be the maximal split central $O$-torus of $L$. Then we have an isomorphism  
$$
S(K)/S(O) \simlgr  U(K) \backslash H(K)/ H(O) .
$$ 
\end{slemma}

\begin{proof} Let $\gX \cong  H/P$ be the $O$-scheme of parabolic 
subgroups of $H$ with same type as $P$ \cite[\S XXVI.3]{SGA3}.
Since $\gX$ is a projective $O$-scheme, we have
$\gX(O)=\gX(K)$. Since $H(O)$ (resp.\ $H(K)$) acts
simply transitively on $\gX(O)$ (resp.\ $\gX(K)$)
according to \cite[XXVI.2.5]{SGA3},
we get that $ H(O) /P(O)=H(K)/P(K)$.
It follows that   \begin{equation}\label{id_iwa}
H(K)=P(K)H(O)= U(K)\, L(K) \, H(O).
\end{equation}

Since $L/S$ is $O$--anisotropic, it is $K$--anisotropic
\cite[3.4]{Gu}.
We have then 
$(L/S)(O)=(L/S)(K)$ according to the Bruhat-Tits-Rousseau's theorem \cite[3.5]{Gu}.
 Hilbert 90 theorem states that  $H^1(O,S)= H^1( K,S)=0$
hence  $L(O)/S(O)=L(K)/S(K)$. It follows that  
$L(K)=S(K)L(O)$; by taking into account the identity \eqref{id_iwa}
we obtain $H(K)= U(K) \, S(K) \,  H(O)$.
We have proven that the map
$$
S(K)/S(O) \to U(K) \backslash H(K)/ H(O) 
$$
is onto. For establishing the injectivity we are given
$s, s' \in S(K)$ such that $s'= u s h$ with $u \in H(K)$, $h \in H(O)$.
We consider the $O$-subgroup scheme $M= U \rtimes S$ of
$P$. Then $h=u_0 \, s_0 \in M(O)= U(O) \rtimes S(O) $ so that
$s'= u \,  s \, u_0  \, s_0 $. We conclude that 
 $s'=  s \,  s_0$.
\end{proof} 
 
We can now proceed to the proof of Theorem 
\ref{thm_ref}.

 \begin{proof}
 Once again we can assume that $E'=G$.
  The implication $(i) \Longrightarrow (ii)$ is obvious. 
Conversely we assume that $E_{C_\kappa}$ is locally trivial for the Zariski topology.
 Then there exists  a divisor $D_\kappa=\{c_1, \dots , c_t\}$  of $C_\kappa$
 such that the $G$--torsors $E_{V^D_\kappa}$ and $E_{D_\kappa}$  are trivial.
 We  denote by 
 $\kappa_i=\kappa(c_i)$ the residue field, it is finite separable over $\kappa$.
 Let $R_i$ be the finite \'etale cover of $R$
 which lifts $\kappa_i/\kappa$; 
 Hensel's lemma applies to $C \times_R R_i$ 
shows that each $c_i$ 
lifts in a closed $R$--subscheme
$D_i \to C$ which is finite \'etale over $R$.
We put $D=D_1 \sqcup D_2 \dots \sqcup D_t$, 
choose $V=\Spec(A)$ containing $D$,  as  in  \S \ref{section_unif}.

 \begin{sclaim}\label{claim_extend} The $G$--torsor $E_{C_\kappa}$ extends  to  a 
 $G$--torsor $F$ whose restrictions to $V^D$ and $D$ are trivial. 
 \end{sclaim}

 We postpone the proof of the Claim. Assuming the Claim, the $G$--torsors
 $E$ and $F$ are isomorphic on $C_\kappa$. Theorem \ref{thm_red}
 shows that $E$ and $F$ are locally isomorphic for the Zariski topology.
 It follows that $E_{V^D}$ is  locally trivial.  By varying  the choices  of $D$,
 we can find a cover of $C$ by open subsets $V'_1, \dots , V'_r$
 such that $E_{V'_i}$ is  locally trivial for $i=1,\dots, r$.
 Thus $E$ is locally trivial 
 for the Zariski topology.
For proving the Claim, we use the uniformization bijections
\[\xymatrix@1{
G(V^D) \backslash L_G(R) /  L^+_G(R)
 \ar[d] \ar[r]^{\sim \qquad \qquad } &
\mathrm{ker}\Bigl(  H^1(C,G) \to H^1(V^D,G) \times
H^1\bigl( \widehat A, G \bigr) \Bigr) \ar[d] \\
G(V^D_\kappa) \backslash L_G(\kappa) /  L^+_G(\kappa)
 \ar[r]^{\sim \qquad \qquad } & \ker\Bigl( 
H^1(C_\kappa,G) \to H^1(V^D_\kappa,G) \times
H^1\bigl( \widehat{A_\kappa}, G \bigr) \Bigr). 
}\]
using the notations of \S \ref{section_unif}.
The $G_{C_\kappa}$--torsor $E_\kappa$ arises then from 
an element $g \in L_G(\kappa)$.
We shall  show that the map $L_G(R) \to L_G(\kappa) /  L^+_G(\kappa)$ is onto, that is, 
$$
\phi:  G\bigl( \widehat A \otimes_A A_\sharp\bigr) \to
G\bigl( \widehat{A_\kappa} \otimes_A A_\sharp \bigr) /  
G\bigl( \widehat{A_{\kappa}} \bigr)
$$
is onto (which implies Claim \ref{claim_extend}).
According to \cite[24.19]{Wa}, we have a decomposition 
 $\widehat A = {\widehat A}_1 \times \cdots \times {\widehat A}_t$ in complete local rings
 where $ {\widehat A}_i$ has residue field   $\kappa_i$. 
We have a compatible  decomposition $\widehat{A_\kappa}  = 
\cA_1 \times \cdots  \times \cA_t$ 
where $\cA_i$ is a complete DVR of residue field $\kappa_i$.
Since the map $\widehat{A} \to  \widehat{A_\kappa}$
is onto (by the Mittag-Leffler's condition), it follows that each map 
${\widehat A}_i \to \cA_i$ is onto.
We are then reduced to show that each map 
$$
\phi_i:  G\bigl( \widehat A_i \otimes_A A_\sharp\bigr) \to G\bigl( \cA_i  \otimes_{A} A_\sharp \bigr) /  
G\bigl( \cA_i \bigr)
$$
is onto. We fix an index $i$.
Let $\cF_i$ be the fraction field of  $\cA_i$, we have $\cA_i  \otimes_{A} A_\sharp = \cF_i$.
The quotient $G\bigl( \cF_i \bigr) / G\bigl( \cA_i \bigr)$ is described by the Iwasawa decomposition.

Let $Q_i$ be a minimal parabolic $\kappa_i$--subgroup of $G_{\kappa_i}$, it lifts to 
an ${\widehat A}_i$--parabolic $P_i$ subgroup of $G_{{\widehat A}_i}$.
We have $P_i =U_i \rtimes L_i$ where 
 $U_i$ is the unipotent radical of $P_i$  and $L_i$ is a Levi subgroup.
 Let $S_i \subset L_i$ be the maximal ${\widehat A}_i$-split central torus of
\cite[XXVI.7.8]{SGA3}; then 
$S_{i, \kappa_i}$ is a maximal $\kappa_i$--torus of $G_{\kappa_i}$ and 
$S_{i, \cF_i}$ is a maximal $\cF_i$--torus of $G_{\cF_i}$.
We write the  Iwasawa decomposition (Lemma \ref{lem_iwasawa})
$$
  S_i(\cF_i)/  S_i(\cA_i) \simlgr  U_i(\cF_i)  \backslash G\bigl( \cF_i \bigr) / G\bigl( \cA_i \bigr).
$$
In particular, $ U_i(\cF_i) \,  S_i(\cF_i)$ maps onto $G\bigl( \cF_i \bigr) / G\bigl( \cA_i \bigr)$.
We consider the commutative diagram
\[\xymatrix@1{
U_i( \widehat{A}_i  \otimes_A A_\sharp ) \,
S_i( \widehat{A}_i  \otimes_A A_\sharp ) \ar[r] \ar[d] &  U_i(\cF_i) \, 
S_i(\cF_i)  \ar@{->>}[d] \\
G( \widehat{A}_i  \otimes_A A_\sharp ) / G( \widehat{A}_i)
  \ar[r]^{\phi_i} & 
G\bigl( \cF_i \bigr) / G\bigl( \cA_i \bigr).
}\]
The surjectivity of $\phi_i$  follows of the next

\begin{sclaim} \label{claim_iwa} The maps $U_i( \widehat{A}_i  \otimes_A A_\sharp ) \to
U_i(\cF_i)$ and $S_i( \widehat{A}_i  \otimes_A A_\sharp ) \to
S_i(\cF_i)$ are onto.
 
\end{sclaim}

 Since the map $\widehat{A}_i \to  \cA_i$
is onto  it follows that 
the map $\widehat{A}_i  \otimes_A A_\sharp \to  \cA_i \otimes_{A_\kappa} 
A_{\sharp}=\cF_i$ is onto. According to \cite[XXVI.1.12]{SGA3},
$U_i$ is isomorphic (as $\widehat{A}_i$-scheme) to a vector group scheme so that 
$U_i( \widehat{A}_i  \otimes_A A_\sharp ) \to
U_i(\cF_i)$ is onto. 

Since $S_i$ is a split torus, it is enough 
 to check the surjectivity of 
$( \widehat{A}_i  \otimes_A A_\sharp )^\times  \to
\cF_i^\times$. 
Since $\widehat{A}_i$ is a regular local ring, 
the divisor $D_{\widehat{A}_i}$ is principal, i.e. $D_{\widehat{A}_i}= \mathrm{div}(\pi_i)$
with  $\pi_i \in \widehat{A}_i$.
It follows that $\widehat{A}_i  \otimes_A A_\sharp= \widehat{A}_i\bigl[  \frac{1}{\pi_i} \bigr]$
and $\cF_i=\cA_i \bigl[  \frac{1}{\ol{\pi}_i}\bigr]$ where 
$\ol{\pi}_i$ is the image of $\pi_i$ in $\cA_i$.

An element of $\cF_i^\times$ is of the shape $\ol{\pi}_i^r 
\bigl(a_0+ a_1  \ol{\pi}_i  + a_2\ol{\pi}_i^2+  \dots \bigr)$
with $a_0 \in \cA_i^\times$, $a_1,a_2, \dots \in \cA_i$. It is lifted to 
$\Bigl( \widehat{A}_i\bigl[  \frac{1}{\pi_i} \bigr] \Bigr)^\times$ 
by the element $\pi_i^r 
\bigl( \tilde{a_0}+ \widetilde{a_1}  \pi_i  + \widetilde{a_2} \pi_i^2+  \dots \bigr)$
where the  $\widetilde{a_i}$'s lift the $a_i$'s.
 Claim \ref{claim_iwa} is established and ends the proof.
 \end{proof}

 \begin{sremarks}{\rm (a) In case (II), Claim \ref{claim_extend} holds 
 provided $D_\kappa$ is contained in the smooth locus 
 of $C_\kappa$.
 
 \smallskip
 
 \noindent (b) In the case $G$ is semisimple simply connected and $k$ is algebraically closed
 it is well-known that $G$--torsors over $C_\kappa$ are 
 locally trivial for the Zariski topology. Theorem \ref{thm_ref} \, 
 provides then an alternative proof of Drinfeld-Simpson's theorem
 in this case.
  }
 \end{sremarks}

\begin{scorollary} \label{cor_ref0} Assume that $R$ 
is  a, henselian DVR with finite residue field $\kappa$.
 Let $f: C \to \Spec(R)$ be a  smooth  proper curve
 Let $G$ be  a semisimple simply connected $C$--group scheme.
 Then $H^1_{Zar}(C,G)=H^1(C,G)$. 
\end{scorollary}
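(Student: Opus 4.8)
The plan is to verify hypothesis (ii) of Theorem \ref{thm_ref} for the torsor in question and then read off its conclusion (i). So let $E$ be a $G$--torsor over $C$; taking $R$ henselian (replacing it by its henselisation if necessary, which changes neither $\kappa$ nor the closed fibre $C_\kappa$), Theorem \ref{thm_ref} reduces the Zariski local triviality of $E$ on $C$ to the Zariski local triviality of $E_{C_\kappa}$ on the closed fibre. Now $C_\kappa$ is a smooth projective geometrically connected curve over the finite field $\kappa$ and $G_{C_\kappa}$ is again semisimple simply connected, so everything comes down to proving $H^1_{Zar}(C_\kappa,G)=H^1(C_\kappa,G)$, i.e.\ to the case of a curve over a finite field.

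First I would show that $E_{C_\kappa}$ is generically trivial. Let $\mathcal F$ denote the function field of $C_\kappa$; as $\kappa$ is finite and $C_\kappa$ is geometrically connected, $\mathcal F$ is a global field of positive characteristic with field of constants $\kappa$. For each place $v$ the completion $\mathcal F_v$ is a non--archimedean local field, so $H^1(\mathcal F_v,G)=0$ by the theorem of Kneser, extended to the equal characteristic case by Bruhat--Tits. Harder's Hasse principle for semisimple simply connected groups over global function fields then gives $H^1(\mathcal F,G)=0$; hence $E_{C_\kappa}$ is trivial over the generic point of $C_\kappa$.

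It then remains to upgrade generic triviality to Zariski local triviality. For every closed point $y\in C_\kappa$ the local ring $\cO_{C_\kappa,y}$ is a discrete valuation ring with fraction field $\mathcal F$ and finite residue field, and the Grothendieck--Serre conjecture in dimension one --- classical for discrete valuation rings, after Nisnevich --- shows that a generically trivial $G$--torsor is already trivial over $\cO_{C_\kappa,y}$. Thus $E_{C_\kappa}$ is trivial in a Zariski neighbourhood of each closed point and so is locally trivial for the Zariski topology; feeding this back into Theorem \ref{thm_ref} yields $H^1_{Zar}(C,G)=H^1(C,G)$.

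The geometry is entirely absorbed by Theorem \ref{thm_ref}, so the substantive content is arithmetic and imported: the hard part is the generic triviality over the global function field $\mathcal F$, which rests on Harder's Hasse principle together with the local vanishing $H^1(\mathcal F_v,G)=0$ for simply connected groups over local fields of equal characteristic. A minor point to watch is the passage to the henselian case required by Theorem \ref{thm_ref}; since all closed points of $C$ lie on $C_\kappa$ and henselisation preserves $C_\kappa$, the criterion to be checked is genuinely a statement about the single curve $C_\kappa$.
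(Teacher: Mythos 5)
Your proof follows the paper's own argument essentially verbatim: the paper likewise reduces via Theorem \ref{thm_ref} to the closed fibre, quotes Harder's theorem $H^1(\kappa(C_\kappa),G)=1$ over the global function field (which you re-derive from the Hasse principle plus the local vanishing of Kneser--Bruhat--Tits), and invokes Nisnevich's theorem to pass from generic triviality to Zariski-local triviality on $C_\kappa$. The one point you handle no more carefully than the paper does is the henselisation step --- Theorem \ref{thm_ref} assumes $R$ henselian while the corollary does not, and Zariski-local triviality of $E$ over $C\times_R R^h$ does not formally descend to $C$ (the local rings $\cO_{C_{R^h},x}$ are étale, not Zariski, extensions of $\cO_{C,x}$) --- but the paper's own terse proof passes over exactly the same issue, so this is a shared implicit leap rather than a defect peculiar to your argument.
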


\begin{proof} By a theorem of   Harder \cite{Hd2}, 
we have $H^1(\kappa(C^\theta_\kappa),G)=1$ for each connected component $C_\kappa^\theta$ of
$C_\kappa$. Nisnevich's theorem
\cite{N} (see also \cite{Gu})   shows  that    $H^1_{Zar}(C_\kappa,G)=H^1(C_\kappa,G)$.
Thus the corollary   follows from Theorem \ref{thm_ref}.
\end{proof}

\begin{scorollary} \label{cor_ref} Assume that $R$ 
is  an henselian DVR with   residue field $\kappa$.
 Let $f: C \to \Spec(R)$ be a  smooth  projective curve such that its 
 generic fiber is connected.
 Let $G$ be a reductive $C$--group scheme which satisfies the same assumption as in Theorem \ref{thm_red}.
 Let $F$ be the function field of $C$. Then, 
the local-global principle holds for $G$-torsors over $F$ with
respect to all discrete valuations arising from 
codimension one points of $C$.
\end{scorollary}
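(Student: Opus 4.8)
The plan is to reduce the assertion to Theorem \ref{thm_ref} by extending the $G$--torsor $E$ over $F$ to an honest $G$--torsor over the whole surface $C$ and then exploiting its behaviour along the special fibre. We may assume $R$ henselian, so that Theorem \ref{thm_ref} is available (the completion entering the hypotheses at the generic point of the special fibre is unaffected by henselisation). First I would spread $E$ out to a $G$--torsor over a dense open $U\subseteq C$. For a prime divisor $y$ of $C$ whose generic point is not in $U$, the hypothesis says that $E$ is trivial over the completion $F_{v_y}$; since $G$ is reductive and $\mathcal O_{C,y}$ is a discrete valuation ring, triviality over $F_{v_y}$ means $E$ has good reduction at $y$, and good reduction descends from the completion, so $E$ extends across $y$ (this is the Grothendieck--Serre property in dimension one). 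After doing this for the finitely many such divisors, $E$ is defined on $C$ minus a finite set of closed points, and purity for torsors under a semisimple group on the regular two--dimensional scheme $C$ lets me extend it to a class $\widetilde E\in H^1(C,G)$ with $\widetilde E_F=E$.

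Next I would verify that $\widetilde E_{C_\kappa}$ is locally trivial for the Zariski topology. Since $C$ is smooth with geometrically connected fibres, $C_\kappa$ is a smooth geometrically integral curve over $\kappa$; let $\eta$ be its generic point and $v_0$ the associated codimension--one valuation of $C$, so that $F_{v_0}=\mathrm{Frac}(\widehat{\mathcal O}_{C,\eta})$ and $\widehat{\mathcal O}_{C,\eta}$ has residue field $\kappa(C_\kappa)$. By hypothesis $E$ is trivial over $F_{v_0}$; applying the Grothendieck--Serre property over the complete discrete valuation ring $\widehat{\mathcal O}_{C,\eta}$ to the class $\widetilde E_{\widehat{\mathcal O}_{C,\eta}}$, whose generic fibre is $E_{F_{v_0}}$, shows that this class is trivial, hence so is its reduction $\widetilde E_{\kappa(C_\kappa)}$. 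Thus $\widetilde E_{C_\kappa}$ is trivial at the generic point of $C_\kappa$, and Nisnevich's theorem \cite{N} for torsors under a reductive group over a smooth curve yields that $\widetilde E_{C_\kappa}$ is locally trivial for the Zariski topology. Note that, in contrast with Corollary \ref{cor_ref0}, no appeal to Harder is needed here, since only the triviality of the one specific class $\widetilde E_{\kappa(C_\kappa)}$ is required, and this comes directly from the condition at $v_0$.

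Finally, Theorem \ref{thm_ref} in its direction $(ii)\Rightarrow(i)$, applied to $\widetilde E$, shows that $\widetilde E$ is locally trivial for the Zariski topology on $C$; in particular it is trivial on a neighbourhood of the generic point of $C$, so $E=\widetilde E_F$ is trivial over $F$, which is exactly the asserted local--global principle. I expect the delicate point to be the passage from the local hypotheses to a $G$--torsor over all of $C$: extending across codimension--one points is routine via Grothendieck--Serre and uses the local conditions at the horizontal valuations, but the codimension--two extension rests on a purity statement for semisimple--group torsors on the regular surface $C$, which is the only genuinely nontrivial input beyond the results already established. A secondary step to justify carefully is the reduction to the henselian case needed to invoke Theorem \ref{thm_ref}.
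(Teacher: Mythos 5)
Your argument reproduces the paper's proof essentially step for step: you extend the class across all codimension-one points using the local hypotheses (the paper's glueing step via \cite[cor. A.8]{GP}), extend over the remaining closed points by purity for torsors on the regular two-dimensional scheme $C$ (the paper cites \cite[th. 6.13]{CTS0}), deduce generic triviality of the special-fibre class from the injectivity of $H^1(\widehat{\mathcal O}_{C,\eta},G)\to H^1(F_w,G)$ over the complete DVR (Bruhat--Tits in the paper), invoke Nisnevich's theorem \cite{N}, and conclude with Theorem \ref{thm_ref}, exactly as the paper does. The only divergence is that you make explicit the reduction to henselian $R$ needed for Theorem \ref{thm_ref} --- a step the paper leaves silent --- and you rightly flag it as delicate, since transferring the final triviality from the function field of $C\times_R R^h$ back down to $F$ is not automatic.
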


\begin{proof} Let $\xi \in H^1(F, G)$ which is trivial 
 over $F_v$ for all completions at discrete valuations 
 arising from codimension one points of $C$. By glueing \cite[cor. A.8]{GP}, 
there is an element $\zeta \in  H^1(U, G)$ 
which maps to  $\xi$ over $F$ where $U \subset C$
contains all points of codimension $1$. 
According to \cite[th.\ 6.13]{CTS0}, we have 
$H^1(C, G) =H^1(U,G)$ so that we can assume that $U=X$. 
Since $\xi$ is
trivial over the completion $F_w$ of $F$ at the discrete 
valuation $w$ associated to the special fiber of $C$, 
we claim that the 
specialisation $\zeta_\kappa \in H^1(C_\kappa ,G)$ of $\zeta$
is generically trivial. This follows from the
fact that $H^1(O_w, G)$ injects in $H^1(F_w, G)$
due to Bruhat-Tits (see \cite[Th. 5.1]{Gu}).
According to  Nisnevich's theorem
\cite{N} (see also \cite{Gu}) this class is Zariski locally trivial on $C_\kappa$.
Theorem \ref{thm_ref} enables us to conclude that 
$\eta$ is Zariski locally trivial and hence $\xi$ is trivial.
\end{proof}

\begin{sremarks}{\rm
(a) 
The only  case where we knew that local-global
principle for $G$ simply connected group defined over $C$ (for arbitrary residue fields)
is when  $G_{F}$ is $F$-rational.
In this case, Harbater, Hartmann and Krashen 
established  their ``patching local-global principle''
\cite[th. 3.7]{HHK} which implies our local-global principle
according to \cite[Th. 4.2.(ii)]{CTPS}.

\smallskip

\noindent (b) 
The  special case when  $R$ is the ring of 
integers of a $p$-adic field was already known
\cite[Th. 4.8]{CTPS}. 
}
\end{sremarks}

\medskip

\section{Appendices} \label{section_appendix}

The purpose of this appendix is to provide proofs  to statements  
for algebraic spaces and stacks
which are well-known among  experts.

\subsection{Jacobian criterion  for stacks}

Let $S$ be a scheme and  let $\cX$, $\cY$
be quasi-separated algebraic $S$--stacks of finite presentation.
Let  $g: \cX \to \cY$ be a $1$-morphism over $S$. 
We have a $1$--morphism $Tg: T(\cX) \to T(\cY)$ of
algebraic stacks \cite[17.14,  17.16]{LMB}.

Let $s  \in   S$ and denote by $K$ the 
residue field of $s$. Let $x:
\Spec(K) \to \cX$ be a $1$-morphism mapping to $s$. 
We put  $T(\cX)_x= T(\cX/S) \times_{\cX} \Spec(K)$
and denote by  $\mathrm{Tan}_x(\cX)$ the category  $T(\cX)_x(K)$.
We denote by $y= g \circ x: \Spec(K) \to \cY$ and get 
the tangent morphism 
$(Tg)_x: \mathrm{Tan}_x(\cX) \to \mathrm{Tan}_y(\cY)$.

\newpage

\begin{sproposition} \label{prop_jacobian}
We assume that $\cX$ is smooth at $x$ over $S$. Then the
following assertions are equivalent:

\smallskip 

\noindent (i) The morphism $g$ is smooth at $x$;

\smallskip

\noindent (ii)  The tangent morphism $(Tg)_x:
\mathrm{Tan}_x(\cX) \to \mathrm{Tan}_y(\cY)$
is essentially surjective.

\smallskip

\noindent Furthermore, under those conditions, $\cY$ is smooth at $y$ over $S$.
\end{sproposition}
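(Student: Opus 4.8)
The plan is to test smoothness of $g$ at $x$ through the infinitesimal lifting criterion for algebraic stacks (\cite[2.6]{He2}, or \cite[\S 98.8, Tag 0DNV]{St}), read against the description $T(\cX/S)(Y)=\cX(Y[\epsilon])$ of the tangent stack (\cite[17.16]{LMB}). Since $\cX$ and $\cY$ are of finite presentation over $S$, the morphism $g$ is automatically locally of finite presentation, so the only issue is formal smoothness: $g$ is smooth at $x$ exactly when, for every square-zero thickening $T_0\hookrightarrow T$ of affine $S$-schemes with ideal $\cI$ and every $2$-commutative square with edges $a_0:T_0\to\cX$ (which we may shrink to land in the open locus where $\cX\to S$ is smooth) and $b:T\to\cY$ together with an isomorphism $g\,a_0\simlgr b|_{T_0}$, a dotted lift $T\to\cX$ exists. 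Condition (ii) is precisely this assertion for the dual-number thickening $\Spec(K)\hookrightarrow\Spec(K[\epsilon])$, so the entire content is to bridge this first-order instance and the general one.

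The implication $(i)\Rightarrow(ii)$ is then immediate: applying the criterion with $T=\Spec(K[\epsilon])$, $T_0=\Spec(K)$, $a_0=x$ and $b=\tilde y$ for an arbitrary object $\tilde y\in\mathrm{Tan}_y(\cY)=T(\cY/S)_y(K)$ produces $\tilde x\in\cX(K[\epsilon])$ restricting to $x$ with $g\,\tilde x\simlgr\tilde y$, that is, an object of $\mathrm{Tan}_x(\cX)$ lying over $\tilde y$; this is essential surjectivity of $(Tg)_x$. For the converse I would, given a general square-zero datum as above, first invoke smoothness of $\cX\to S$ at $x$ to choose a lift $a_1:T\to\cX$ of $a_0$ over $S$, ignoring $b$. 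Then $g\,a_1$ and $b$ are two lifts of $g\,a_0$ to $\cY(T)$; by the deformation theory recalled in \S\ref{app_lie} (following \cite[7.A]{O}) their discrepancy is a class $\xi\in\Hom_{\cO_{T_0}}\!\bigl((g\,a_0)^*\Omega^1_{\cY/S},\cI\bigr)$, while replacing $a_1$ by a different $S$-lift alters $g\,a_1$ by $\eta\circ c$, where $\eta\in\Hom_{\cO_{T_0}}\!\bigl(a_0^*\Omega^1_{\cX/S},\cI\bigr)$ and $c\colon (g\,a_0)^*\Omega^1_{\cY/S}\to a_0^*\Omega^1_{\cX/S}$ is the cotangent map. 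A lift compatible with $b$ thus exists as soon as one can solve $\eta\circ c=\xi$, which is possible for every $\cI$ precisely when $c$ is a locally split injection near $x$.

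The main obstacle is therefore to deduce from the pointwise hypothesis (ii) that $c$ is locally split injective in a neighbourhood of $x$. Essential surjectivity of $(Tg)_x$ gives surjectivity of $\pi_0\mathrm{Tan}_x(\cX)\to\pi_0\mathrm{Tan}_y(\cY)$, equivalently injectivity of $c\otimes\kappa(x)$; and $\Omega^1_{\cX/S}$ is locally free near $x$ by smoothness of $\cX\to S$. A Nakayama argument suggests that $c$ is then a locally split injection with locally free cokernel $\Omega^1_{\cX/\cY}$, but I expect this spreading-out to be the genuinely delicate point, since injectivity after $\otimes\,\kappa(x)$ of a map out of the a priori arbitrary coherent sheaf $(g\,a_0)^*\Omega^1_{\cY/S}$ does not propagate formally. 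The cleanest way to secure it is to bypass the sheaf bookkeeping by descending to schemes: choose compatible smooth atlases $V\to\cY$ and $U\to\cX\times_\cY V$ with points $v,u$ over $y,x$, so that $g$ is smooth at $x$ iff $U\to V$ is smooth at $u$. Because the target $V$ is now an honest scheme, essential surjectivity of the tangent groupoids collapses to ordinary surjectivity $\pi_0\mathrm{Tan}_u(U/S)=T_u(U/S)\to T_v(V/S)$, and the scheme-level Jacobian criterion \cite[17.11.1]{EGA4} applied to $U\to V$ — using that $U\to S$ is smooth at $u$ since $U\to\cX$ is smooth — yields that $U\to V$ is smooth at $u$, hence $(i)$.

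Finally, once $g$ is smooth at $x$ it is flat and locally of finite presentation there, so $g$ is open and restricts to a faithfully flat morphism onto an open neighbourhood of $y$. As $\cX\to S$ is smooth at $x$, faithfully flat descent of smoothness along $g$ (\cite{St}, or \cite[17.7.5]{EGA4}) shows that $\cY\to S$ is smooth at $y$, which is the asserted extra conclusion.
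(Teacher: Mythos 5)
Your operative argument for the main equivalence coincides with the paper's: $(i)\Rightarrow(ii)$ via the infinitesimal lifting criterion applied to the thickening $\Spec(K)\hookrightarrow\Spec(K[\epsilon])$, and $(ii)\Rightarrow(i)$ by descending along smooth atlases $V\to\cY$ and $U\to\cX\times_\cY V$ and invoking the scheme-level Jacobian criterion \cite[17.11.1]{EGA4}. Your preliminary cotangent-sheaf computation, which you rightly abandon, would in any case be unsound for stacks as written: square-zero lifts along a morphism to an algebraic stack are governed by the cotangent complex, not simply by torsors under $\Hom_{\cO_{T_0}}\bigl(a_0^*\Omega^1_{\cX/S},\cI\bigr)$, so discarding it was the correct move rather than a loss. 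Two steps you compress are precisely where the paper spends its effort. First, the sentence ``essential surjectivity of the tangent groupoids collapses to ordinary surjectivity $T_u(U/S)\to T_v(V/S)$'' requires (a) that tangent stacks commute with $2$-fibre products \cite[17.5.1]{LMB}, which transfers hypothesis (ii) to essential surjectivity for $\cX\times_\cY V\to V$ at the lifted point, and (b) essential surjectivity of the tangent map of the smooth atlas $U\to\cX\times_\cY V$, which is your already-established direction $(i)\Rightarrow(ii)$; the paper writes out both. Second, the existence of atlas points $u$, $v$ with residue field exactly $K$ is not automatic (a nonempty smooth scheme over $K$ need not have a $K$-point) but is guaranteed by \cite[Thm.\ 6.3]{LMB}, and it is genuinely needed: hypothesis (ii) is formulated at $K$, and without matching residue fields it would not directly yield surjectivity of the honest tangent spaces at $u$ and $v$. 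Finally, for the last clause you argue by openness of the (now smooth) morphism $g$ and descent of smoothness along the resulting faithfully flat restriction; this is valid but heavier than the paper's route, which simply reuses the ``furthermore'' conclusion of \cite[17.11.1]{EGA4} for the scheme morphism $U\to V$ to get $V$ smooth over $S$ at $v$, whence $\cY$ is smooth at $y$ by the very definition of smoothness for stacks.
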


 \begin{proof}
 In the case of 
  a morphism $g: X \to Y$ of $S$--schemes locally of finite presentation
  such that $g(x)=y$ and $X$ is smooth at $x$ over $S$,
  we have that $K=\kappa(x)=\kappa(y)$ so that the statement
  is a special case of \cite[17.11.1]{EGA4}.
   We proceed now to the stack case.

\smallskip

\noindent $(i) \Longrightarrow (ii).$  Up to shrinking, we can assume
 that $\cX$ is smooth over $S$ and that $g$ is smooth.
 We denote by $i: \Spec(K) \to \Spec(K[\epsilon])$.

 We are given an object of  $\mathrm{Tan}_y(\cY)$, that is 
 a morphism couple $(y', \eta)$ where $y': \Spec(K[\epsilon]) \to \cY$
 together with a $2$--morphism $\eta:   y'  \circ i \to y$.
 We remind that $g$ is formally smooth \cite[Tag 0DP0]{St} that is,   if it is formally smooth on objects as
 a $1$-morphism in categories fibered in groupoids \cite[Tag 0DNV]{St}.
A special case is  for  the following commutative  diagram
\[\xymatrix@1{
\Spec(K) \ar[d]^i \ar[r]^{\quad x} & \gX \ar[d]^g  \\
\Spec(K[\epsilon] ) \ar[r]^{\quad y'} \ar@{.>}[ru] & \gY,
}\]
where $\eta: y' \circ i \to y= g \circ x=y$ is a  $2$-morphism witnessing the commutativity
 of the diagram;  there exists  a triple $(x', \alpha, \beta)$ where :

 \smallskip

\noindent (i) $x': \Spec(K[\epsilon] ) \to \gX$ is a morphism;

 \smallskip

 \noindent (ii) $\alpha: x' \circ i \to x$, $\beta: y' \to g \circ x'$ are $2$--arrows such that 
  $\eta= ( id_g \star \alpha ) \circ (  \beta \star id_i )$.
  
  \smallskip
  
 \noindent  It follows that $g(x', \alpha)= (g \circ x', g \circ  \alpha) \in 
 \mathrm{Tan}_y(\cY)$ is isomorphic to $(y', \eta)$.
 This establishes the  essential surjectivity
 of the tangent morphism.

 \smallskip
 
 \noindent $(ii) \Longrightarrow (i).$ According to \cite[Thm. 6.3]{LMB}, there
 exists a smooth $1$--morphism $\varphi : Y \to \cY$ and a point $y_1 \in Y(K)$
 mapping to $y$ such that $Y$ is an affine scheme.
 We note that $K=\kappa(y_1)$.
 We consider the fiber product $\cX'= \cX \times_\cY Y$, it is an algebraic
 stack and there exists a $1$--morphism $x': \Spec(K) \to \cX'$ lifting  $x$ and $y_1$.
 There exists a smooth $1$--morphism $\psi : X' \to \cX'$ and a point $x_1 \in X'(K)$
 mapping to $x$ such that $X'$ is an affine scheme. By construction
 we have again that $K=\kappa(x_1)$. We have then 
  the commutative diagram 
  
\[\xymatrix@1{
X' \ar[r]^\psi  & \cX' \ar[d] \ar[r]^{g'} & Y  \ar[d]^\varphi\\
  & \cX \ar[r]^g  & \cY .
}\]
 
 \noindent According to \cite[Lem.\ 17.5.1]{LMB}, the square 
 
\[\xymatrix@1{
  T(\cX'/S) \ar[d] \ar[r]^{Tg'} & T( Y/S) \ar[d]^{T\varphi} \\
 T(\cX/S) \ar[r]^{Tg}  & T(\cY/S) 
}\]
is $2$--cartesian. It follows that the square

\[\xymatrix@1{
  \mathrm{Tan}_{x'}(\cX') \ar[d]^{(T\psi)_{x_1}} \ar[rr]^{(Tg')_{x'}} & &
  \mathrm{Tan}_{y_1}(Y) \ar[d]^{(T\varphi)_{y_1}} \\
  \mathrm{Tan}_{x}(\cX)  \ar[rr]^{(Tg)_x}  && \mathrm{Tan}_{y}(\cY)
}\]
is $2$-cartesian.
Our assumption is that  the bottom morphism is essentially surjective,
it follows that $(Tg')_{x'}: \mathrm{Tan}_{x_1}(\cX') \to \mathrm{Tan}_{y_1}(Y)$
is essentially surjective as well.
Since $\psi$ is smooth, the map $(T\psi)_{x_1}: 
\mathrm{Tan}_{x'}(X') \to \mathrm{Tan}_{x_1}(\cX')$ is essentially surjective.
By composition it follows that $\mathrm{Tan}_{x_1}(X') \to \mathrm{Tan}_{y_1}(Y)$
is essentially surjective. Since $X'$ and $Y$ are locally of finite presentation
over $S$, the case of  schemes yields that $g' \circ \psi : X' \to Y$ is smooth at $x'$.
By definition of  smoothness for morphisms of  stacks \cite[\S 8.2]{O},
we conclude that $g$ is smooth at $x$.

\smallskip

We assume (ii) and shall show that $\cY$ is smooth at $y$ over $S$. 
Using the diagrams of the proof, we have seen that the $S$--morphism
 $X' \to Y$ of schemes is smooth at $x'$.
 Once again the classical Jacobian criterion \cite[17.11.1]{EGA4}
 applies and shows that $Y$ is smooth at $y_1$ over $S$.
 By definition of smoothness for stacks, we get that $\cY$ is smooth 
 at $y$ over $S$.
\end{proof}

\subsection{Lie algebra of an $S$-group space}\label{app_lie}

 Let $S$ be a scheme.
Let $f:  X \to Y$ be a morphism of $S$--algebraic spaces.
We consider the quasi-coherent sheaf $\Omega^1_{ X/Y}$ 
on $X$ defined in \cite[Tag 04CT]{St}.
Let $T$ be an $S$--scheme  equipped with
a closed subscheme $T_0$ defined by a quasi-coherent ideal
$\cI$ such that $\cI^2=0$. 
According to \cite[7.A page 167]{O} for any commutative diagram 
of algebraic spaces 
\[\xymatrix@1{
T_0 \ar[r]^{x_0}  \ar@{^{(}->}[d] & X \ar[d]^f \\
 T \ar[r]^y \ar@{.>}[ru] & Y .
}\]
if there exists a dotted arrow filling in the diagram then the set of such dotted
arrows form a torsor under $\Hom_{\cO_{T_0}}(x_{0}^*\Omega^1_{X/Y} , \cI)$.
We extend to group spaces well-known statements on   
group schemes  \cite[II.4.11.3]{SGA3}.

\begin{slemma}\label{lem_lie1}
Let $G$ be an $S$-group space. We denote by $e_G: S \to G$ the unit point 
and put $\omega_{G/S}=e_G^*\bigl( \Omega^1_{G/S})$.

\smallskip

\noindent (1) There is a canonical isomorphism  of $S$--functors
$\Lie(G) \simlgr \VV(\omega_{G/S})$ which is compatible
with the $\cO_S$--structure.
 
 \smallskip
 
 \noindent (2) If $\omega_{G/S}$ is a locally free coherent sheaf, 
 then $\Lie(G) \simlgr \WW(\omega_{G/S}^\vee)$.
 In particular we have an isomorphism 
 $$
 \Lie(G)(R) \otimes_R R' \simlgr \Lie(G)(R')
 $$
 for each morphism of $S$--algebras $R \to R'$.
 
 \smallskip
 
 \noindent (3) Assume that $G$ is smooth and quasi-separated over $S$.
 Then $\omega_{G/S}$ is a  finite locally free coherent sheaf and
 (2) holds.

\end{slemma}

Under the conditions of (2) or (3), we 
 denote also by $\cLie(G)=\omega_{G/S}^\vee$
 the locally free coherent sheaf.

\begin{proof}
(1) Let $T_0$ be an $S$-scheme and consider $T=T_0[\epsilon]$.
We apply the above fact
to the morphism $G \to S$ and the points $x_0=e_{G_{T_0}}$
and $y :T \to S$ the structural morphism.
It follows that $\ker\bigl( G(T) \to G(T_0) \bigr)$
is a torsor under $\Hom_{\cO_{T_0}}(e_{G_{T_0}}^* \Omega^1_{G/S} 
, \epsilon \, \cO_{T_0}) \cong 
\Hom_{\cO_{T_0}}(e_{G_{T_0}}^*\Omega^1_{G/S},\cO_{T_0})
= \Hom_{\cO_{T_0}}(\omega^1_{G/S} \otimes_{\cO_S} \cO_{T_0},\cO_{T_0})$.
We have constructed a isomorphism of $S$--functors
$\Lie(G) \, \simlgr \,  \VV(\omega_{G/S})$ and  the compatibility 
of $\cO_S$--structures  is a straightforward checking.

\smallskip

\noindent (2) If $\omega_{G/S}$ is a locally free coherent sheaf, 
 then $\Lie(G) \simlgr \VV(\omega_{G/S}) \simlgr \WW(\omega_{G/S}^\vee)$.  
The next fact follows from \cite[12.2.3]{EGA3}.

\smallskip

\noindent (3) According to \cite[Tag 0CK5]{St},
$\Omega^1_{G/S}$ is a finite locally free coherent sheaf over $G$.
If follows that $\omega^1_{G/S}$ is a finite locally free coherent sheaf over $S$.

\end{proof}

\begin{slemma}\label{lem_lie2} Let $G$ be a smooth $S$-group space and let 
$T$ be an $S$--scheme  equipped with
a closed subscheme $T_0$ defined by a quasi-coherent ideal
$\cI$ such that $\cI^2=0$. 
We denote by $t_0: T_0 \to S$ the structural 
morphism,  $G_0 = G \times_S T_0$ and assume that 
$t_0$ is  quasi-compact  and quasi-separated.

\smallskip

\noindent (1) We have an exact sequence of fppf (resp.\ \'etale, Zariski)
sheaves on $S$
$$
0 \to \WW\Bigl( (t_0)_*\bigl( \cLie(G_0) \otimes_{\cO_{T_0}} \cI \bigr) \Bigr)
\to \prod_{T/S} G \to \prod_{T_0/S} G \to 1  .
$$

\smallskip

\noindent (2) If $T= \Spec(A)$ is affine and 
$T_0=\Spec(A/I)$, we have  an exact sequence
$$
0 \to \Lie(G)(A) \otimes_A I \to G(A) \to G(A/I) \to 1.
$$
\end{slemma}

\begin{proof}
(1)  We  have
 $$\Hom_{\cO_{T_0}}(\omega_{G_0/T_0},\cI)
= H^0(T_0, \cLie(G_0) \otimes_{\cO_{T_0}} \cI)
= H^0\Bigl(T, t_{0,*}( \cLie(G_0) \otimes_{\cO_{T_0}} \cI) \Bigr), $$
whence  an exact sequence
$$
0 \to H^0\Bigl(T, t_{0,*}( \cLie(G_0) \otimes_{\cO_{T_0}} \cI) \Bigr) \to G(T) \to G(T_0).
$$
Now let $h: S' \to S$ be a flat morphism locally of finite presentation
and denote by $G'=G \times_S S'$, $h_T:  T' \to T$, $\dots$ the
relevant base change to $S'$.
Since $t_0$ is quasi-compact and quasi-separated, 
the flatness of $h$ yields an isomorphism \cite[Tag 02KH]{St}
$$
h^*\bigl( t_{0,*}( \cLie(G_0) \otimes_{\cO_{T_0}} \cI)\bigr)
\simlgr
t'_{0,*}\Bigl( h_{T_0}^*\bigl(  \cLie(G_0) \otimes_{\cO_{T_0}} \cI \bigr) \Bigr)
= t'_{0,*}( \cLie(G'_0) \otimes_{\cO_{T'_0}} \cI').
$$
The similar sequence for $T'_0$ reads then 
 $$
0 \to H^0\Bigl(T', t_{0,*}( \cLie(G_0) \otimes_{\cO_{T_0}} \cI) \Bigr)
\to G(T') \to G(T'_0).
$$
We have then an exact sequence of fppf sheaves
$$
0 \to \WW\Bigl( (t_0)_*\bigl( \cLie(G_0) \otimes_{\cO_{T_0}} \cI \bigr) \Bigr)
\to \prod_{T/S} G \to \prod_{T_0/S} G .
$$
For $T$ an affine scheme, the map $G(T) \to G(T_0)$
is onto since the smooth $S$--group space $G$ is formally smooth  
\cite[Tag 04AM]{St}, that is, it satisfies the infinitesimal lifting criterion  \cite[Tag 049S, 060G]{St}.

It implies the exactness for the the Zariski, \'etale and fppf
topologies.

\smallskip

\noindent (2) We can assume that $S=T=\Spec(A)$.
In this case, we have
$$H^0\Bigl(S, (t_0)_*\bigl( \cLie(G_0) \otimes_{\cO_{T_0}} \cI \bigr)\Bigr)
= H^0(T_0, \cLie(G_0) \otimes_{\cO_{T_0}} \cI)
= \Lie(G_0)(A/I) \otimes_{A/I} I.
$$
We have  $\Lie(G_0)(A/I) = \Lie(G)(A) \otimes_{A} A/I$ in view of Lemma 
\ref{lem_lie2}.(2) whence \break the identification
$\Lie(G_0)(A/I) \otimes_{A/I} I \cong  \Lie(G)(A) \otimes_{A} I$.
Then  (1) provides the exact sequence
$$
0 \to \Lie(G)(A) \otimes_A I \to G(A) \to G(A/I) 
$$
and the right map is onto since $G$ is smooth.
\end{proof}

 \begin{sremarks}\label{rem_lie2}{\rm 
  
  \noindent
  (a) A special case of (1) is $T=S[\epsilon]$ and $T_0=S$.
  We get an exact sequence of  fppf (resp.\ \'etale, Zariski)
sheaves on $S$
$$
0 \to  \WW( \cLie(G)) 
\to \prod_{S[\epsilon]/S} G \to  G \to 1  .
$$

  \smallskip
  
(b)  In  the group scheme case, (2)  is established
in \cite[proof of II.5.2.8]{DG}.

}
  
 \end{sremarks}

 \bigskip

\bigskip

\medskip

\end{document}